\DeclareMathOperator{\supp}{supp}
\newtheorem{theorem}{Theorem}
\newtheorem{corollary}[theorem]{Corollary}
\newtheorem{definition}[theorem]{Definition}
\newtheorem{lemma}[theorem]{Lemma}
\newtheorem{proposition}[theorem]{Proposition}
\newtheorem{remark}[theorem]{Remark}
\newenvironment{proof}[1][Proof]{\noindent\textbf{#1:} }{\hfill \rule{0.5em}{0.5em}}
\begin{document}

\title{A dynamical condition for differentiability of Mather's average action%
}
\author{Alexandre Rocha and Mário J. D. Carneiro}
\maketitle

\begin{abstract}
We prove the differentiability of 
$\beta $ of Mather function on all homology classes corresponding to
rotation vectors of measures whose supports are contained in a Lipschitz
Lagrangian absorbing graph, invariant by Tonelli Hamiltonians. We also show
the relationship between local differentiability of $\beta $ and local
integrability of the Hamiltonian flow.
\end{abstract}

\section{Introduction}

Given a Tonneli Lagrangian $L$, Mather introduced the $\beta $-function of $%
L $, which is a convex and superlinear function. Many interesting properties
of the Euler--Lagrange flow can be derived from the study of the behaviour
of the $\beta $-function. Understanding whether or not this function is
differentiable and what are the implications of its re-gularity to the
dynamics of the system is an interesting problem. This type of problem was
developed by D. Massart in several works as, for example, \cite{mas1} and
\cite{mas2}.

Even in this context, D. Massart and A. Sorrentino get in the work \textit{%
Differentiability of Mather's average action and integrability on closed
surfaces }(see \cite{sor1}) the relation, on closed surfaces, between the
differentiability of $\beta $-function and the integrability of the system.
However there are examples of systems which are not integrable but have
invariant Lipschitz Lagrangian graphs, i.e. invariant graphs of the form $%
\mathcal{G}_{\eta ,u}=graph\left( \eta +du\right) $ where $\eta $ is a
closed one-form and $u$ is a function of class $C^{1}$ with Lipschitz
differential.

Motivated by these problems, in this work we study the differentiability of $%
\beta $ at homologies $h$ whose the measures with vector rotation $h$ are
supported on an invariant Lipschitz Lagrangian graph. We obtain
differentiability of $\beta $ in these homologies if the invariant graph is
an absorbing graph, i.e. a graph which not contain $\omega $-limit of
minimizing curves out of it\footnote[1]{%
the formal definitions and all notations are defined in the Sections \ref%
{sec2}, \ref{sec3} and \ref{sec4}}. More precisely, we prove the following
theorem:

\begin{theorem}
\label{teo1}Let $\mathcal{G}_{\eta ,u}$ be an invariant Lipschitz Lagrangian
graph. Then $\mathcal{G}_{\eta ,u}$ is absor-bing if and only if $\beta $ is
differentiable at $h$ for all $h\in \partial \alpha \left( \left[ \eta %
\right] \right) $ and $\mathcal{A}_{\left[ \eta \right] }^{\ast }=\mathcal{G}%
_{\eta ,u}.$
\end{theorem}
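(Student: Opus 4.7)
My plan is to prove the two implications separately, using standard facts from Mather / weak-KAM theory: orbits on an invariant Lipschitz Lagrangian graph $\mathcal{G}_{\eta,u}$ are $[\eta]$-calibrated, so $\mathcal{G}_{\eta,u}\subseteq \mathcal{A}^{*}_{[\eta]}$; Fenchel duality gives $c\in\partial\beta(h)$ iff $h\in\partial\alpha(c)$; and the $\omega$-limit of a globally $c$-minimizing curve is invariant, lies in $\mathcal{A}^{*}_{c}$, and supports a $c$-Mather measure whose rotation vector belongs to $\partial\alpha(c)$.

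For the direction $(\Leftarrow)$, assume $\mathcal{A}^{*}_{[\eta]}=\mathcal{G}_{\eta,u}$ and that $\beta$ is differentiable at every $h\in\partial\alpha([\eta])$. Let $\gamma$ be a $[\eta]$-minimizing curve whose lift lies outside $\mathcal{G}_{\eta,u}$; I would show its $\omega$-limit cannot be contained in $\mathcal{G}_{\eta,u}$. If it were, then $\omega(\gamma)$, being invariant and inside the Aubry set, would support a $[\eta]$-Mather measure of some rotation vector $h\in\partial\alpha([\eta])$. Differentiability of $\beta$ at this $h$ identifies $[\eta]$ as the unique element of $\partial\beta(h)$, and a Ma\~n\'e-potential / weak-KAM argument then shows $\gamma$ is calibrated by the very function $u$ defining $\mathcal{G}_{\eta,u}$, forcing $\dot\gamma(t)+\eta = du$ along $\gamma$ and hence $\gamma\subseteq\mathcal{G}_{\eta,u}$, a contradiction.

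For the direction $(\Rightarrow)$, assume $\mathcal{G}_{\eta,u}$ is absorbing. To see $\mathcal{A}^{*}_{[\eta]}=\mathcal{G}_{\eta,u}$, the inclusion $\supseteq$ is standard; for $\subseteq$ I would take $(x,v)\in\mathcal{A}^{*}_{[\eta]}$, note that $\omega(x,v)\subseteq\mathcal{A}^{*}_{[\eta]}$ supports a $[\eta]$-Mather measure, and run a chain-recurrence / static-class argument (exploiting that $\mathcal{G}_{\eta,u}$ itself carries at least one $[\eta]$-Mather measure) to force $\omega(x,v)\cap\mathcal{G}_{\eta,u}\neq\emptyset$, whereupon the absorbing property yields $(x,v)\in\mathcal{G}_{\eta,u}$. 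For the differentiability, suppose for contradiction that $\beta$ fails to be differentiable at some $h\in\partial\alpha([\eta])$, so that there exists $c'\in\partial\beta(h)$ with $c'\neq[\eta]$. Using strict convexity of $L$ in the fibers together with $c'\neq[\eta]$, I would exhibit a $c'$-minimizing curve $\gamma$ of rotation vector $h$ whose lift is off $\mathcal{G}_{\eta,u}$. Since $h\in\partial\alpha([\eta])\cap\partial\alpha(c')$, any $c'$-Mather measure of rotation $h$ is automatically $[\eta]$-Mather, hence supported on $\mathcal{A}^{*}_{[\eta]}=\mathcal{G}_{\eta,u}$; thus $\omega(\gamma)\cap\mathcal{G}_{\eta,u}\neq\emptyset$ with $\gamma$ off the graph, contradicting absorbing.

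The main obstacle I anticipate is producing the $c'$-minimizing orbit outside $\mathcal{G}_{\eta,u}$ in the differentiability step of $(\Rightarrow)$: one must exclude the pathological possibility that every $c'$-minimizer of rotation vector $h$ already lies on the graph. This should follow from the strict convexity of $L$ in $v$, which forces distinct cohomology classes to have distinct calibrating momenta, but it is the delicate point where the Lipschitz regularity of $u$ and the graph property really enter. A secondary subtlety is the chain-recurrence step in the inclusion $\mathcal{A}^{*}_{[\eta]}\subseteq\mathcal{G}_{\eta,u}$, which depends on the existence of at least one Mather measure carried by $\mathcal{G}_{\eta,u}$ to guarantee a nonempty intersection between Aubry $\omega$-limits and the graph.
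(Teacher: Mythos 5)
Your outline has several genuine gaps, the most basic of which is that you have the two inclusions in $\mathcal{A}_{\left[ \eta \right]}^{\ast }=\mathcal{G}_{\eta ,u}$ backwards. Calibration of the orbits of $\mathcal{G}_{\eta ,u}$ by the single subsolution $u$ places the graph inside $\widetilde{\mathcal{I}}_{\left[ \eta \right]}\left( u\right) $, i.e.\ inside the Ma\~{n}\'{e} set $\mathcal{N}_{\left[ \eta \right]}^{\ast }$, \emph{not} inside the Aubry set, which is the intersection over all subsolutions. So $\mathcal{G}_{\eta ,u}\subseteq \mathcal{A}_{\left[ \eta \right]}^{\ast }$ is precisely the non-trivial inclusion (it fails for a general invariant graph, e.g.\ one carrying a saddle connection), while $\mathcal{A}_{\left[ \eta \right]}^{\ast }\subseteq \mathcal{G}_{\eta ,u}$ is the immediate one. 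The paper's Lemma \ref{lem4} is devoted entirely to the hard inclusion: it uses the absorbing property to extend forward-calibrated curves of a positive weak KAM solution $u_{+}$ backward in time along the graph, deduces that $u_{+}$ is differentiable everywhere with $d_{x}u_{+}$ pinned down by the graph, hence that weak KAM solutions are unique up to constants and $\mathcal{A}_{c}^{\ast }=\mathcal{N}_{c}^{\ast }=\mathcal{G}_{\eta ,u}$ for every $c$ in the relative interior of the relevant flat of $\alpha $. You cannot dismiss this as standard, and the chain-recurrence argument you propose instead addresses the inclusion that needs no proof.

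There are three further problems. First, you twice invoke the absorbing property from the hypothesis $\omega \left( x,v\right) \cap \mathcal{G}_{\eta ,u}\neq \emptyset $, but the definition requires the full containment $\omega \left( x,v\right) \subseteq \mathcal{G}_{\eta ,u}$. Second, in the $(\Leftarrow )$ direction you only test absorption against $\left[ \eta \right] $-minimizing curves, whereas $\mathcal{N}_{+}^{T}\left( L\right) $ is the union of the forward Ma\~{n}\'{e} sets over \emph{all} cohomology classes; the essential step --- showing that a forward $c$-calibrated orbit whose $\omega $-limit lies in $\mathcal{G}_{\eta ,u}=\mathcal{A}_{\left[ \eta \right]}^{\ast }$ forces $c=\left[ \eta \right] $, which the paper gets from Massart's result that intersecting Aubry sets place $c$ and $\left[ \eta \right] $ in a common flat of $\alpha $, combined with differentiability of $\beta $ --- is missing, as is the subsequent Fenchel argument identifying $\dot{\gamma}\left( 0\right) $ with the Aubry vector at $x$. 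Third, in the $(\Rightarrow )$ direction your plan hinges on exhibiting a $c^{\prime }$-minimizing orbit of rotation vector $h$ off the graph, which you yourself flag as the main obstacle and do not resolve; the paper avoids this entirely, deducing $c^{\prime }=\left[ \eta \right] $ from $\mathcal{A}_{c^{\prime }}^{\ast }\cap \mathcal{A}_{\left[ \eta \right]}^{\ast }\neq \emptyset $ (supplied by Lemma \ref{lem1}) together with Lemma \ref{lem4}, rather than from a dynamical contradiction. Even granting such an orbit, its $\omega $-limit lies in $\mathcal{A}_{c^{\prime }}^{\ast }$, which you have not shown to be contained in the graph, so the absorbing property could not be applied in any case.
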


One can derive some consequences of this result. For instance, if the system
is locally Lipschitz integrable on an invariant Lipschitz Lagrangian graph $%
\mathcal{G}_{\eta ,u}\subset T^{\ast }M,$ i.e. there exists a neighborhood $%
\mathcal{V}$ of $\mathcal{G}_{\eta ,u}$ in $T^{\ast }M$ foliated by disjoint
invariant Lipschitz Lagrangian graphs, of course that the graphs contained
in $\mathcal{V}$ are absorbing, so the following result is a local version
of a result of D. Massart and A. Sorrentino (See \cite{sor1}, Lemma 5).

\begin{theorem}
\label{teo2}Let $\mathcal{G}_{\eta ,u}$ be an invariant Lipschitz Lagrangian
graph. If $H$ is locally Lipschitz integrable on $\mathcal{G}_{\eta ,u}$,
then there exists a neighborhood $U_{0}\subset H^{1}\left( M;%
\mathbb{R}
\right) $ of $\left[ \eta \right] $ such that $\beta $ is differentiable at
any point of $V=\bigcup\nolimits_{c\in U_{0}}\partial \alpha \left( c\right)
.$
\end{theorem}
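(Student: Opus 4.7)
The strategy is to apply Theorem~\ref{teo1} leaf by leaf to the foliation provided by local integrability, and then to argue that the cohomology classes of those leaves sweep out a full neighborhood of $[\eta]$ in $H^{1}(M;\mathbb{R})$.

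First I will unpack the hypothesis: a neighborhood $\mathcal{V}\subset T^{\ast}M$ of $\mathcal{G}_{\eta,u}$ is foliated by disjoint invariant Lipschitz Lagrangian graphs $\mathcal{G}_{\eta',u'}$. As already noted in the paragraph preceding the statement, each such leaf is absorbing: if a minimizing curve has its $\omega$-limit inside $\mathcal{V}$, then by invariance of each leaf and the disjointness of the foliation the curve must eventually be confined to a single leaf, so an $\omega$-limit point on $\mathcal{G}_{\eta',u'}$ forces the whole curve to lie on $\mathcal{G}_{\eta',u'}$.

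Next, I consider the map $\Phi$ that sends each leaf of the foliation to its de Rham cohomology class $[\eta']\in H^{1}(M;\mathbb{R})$. Theorem~\ref{teo1} applied to each (absorbing) leaf yields two things at once: (a) the identification $\mathcal{A}^{\ast}_{[\eta']}=\mathcal{G}_{\eta',u'}$, which forces $\Phi$ to be injective since two distinct leaves cannot simultaneously coincide with the same Aubry set $\mathcal{A}^{\ast}_{c}$, and (b) the differentiability of $\beta$ at every $h\in\partial\alpha([\eta'])$. Consequently, as soon as I can exhibit a neighborhood $U_{0}$ of $[\eta]$ in $H^{1}(M;\mathbb{R})$ contained in the image of $\Phi$, the set $V=\bigcup_{c\in U_{0}}\partial\alpha(c)$ will lie inside the differentiability locus of $\beta$, which is the desired conclusion.

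The main obstacle will be showing that $\Phi$ is open at $[\eta]$. The cleanest route is to read this openness into the notion of ``locally Lipschitz integrable'', requiring the transverse direction of the foliation to be (continuously or Lipschitz) parameterized by cohomology class in a neighborhood of $[\eta]$. Alternatively, one argues it directly from the Lagrangian structure: every invariant Lagrangian graph $C^{0}$-close to $\mathcal{G}_{\eta,u}$ has a well-defined cohomology class close to $[\eta]$, the foliation fills an open neighborhood of $\mathcal{G}_{\eta,u}$ in $T^{\ast}M$, and by the injectivity of $\Phi$ established above the continuous map from leaves to classes cannot collapse, so its image must contain a full neighborhood $U_{0}$ of $[\eta]$. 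Once $U_{0}$ is in hand, combining steps (a) and (b) above over all $c\in U_{0}$ finishes the proof.
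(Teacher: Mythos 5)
There is a genuine gap, and it sits exactly where you flag "the main obstacle": producing the neighborhood $U_{0}$. Your first route (building the openness of the leaf-to-class map into the definition of locally Lipschitz integrable) changes the hypothesis, since the paper's definition only asks for a neighborhood $\mathcal{V}\subset T^{\ast}M$ foliated by invariant Lipschitz Lagrangian graphs, with no requirement on how the cohomology classes of the leaves vary. Your second route is not a valid inference: a continuous injective map need not have open image. The leaf space of the foliation is transversally modelled on $T_{x}^{\ast}M\cong\mathbb{R}^{\dim M}$ (each leaf is a graph, so it meets a fixed fiber in one point), while the target is $H^{1}(M;\mathbb{R})$, whose dimension $b_{1}(M)$ is in general different from $\dim M$; injectivity plus continuity yields openness of the image only via invariance of domain, i.e.\ only when $b_{1}(M)=\dim M$. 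That dimensional hypothesis is precisely what the paper adds in Theorem \ref{teo3} (where the invariance-of-domain argument is carried out), and it is absent from Theorem \ref{teo2}. So as written, your argument does not establish that a full neighborhood of $[\eta]$ in $H^{1}(M;\mathbb{R})$ is realized by leaves.

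The paper circumvents this entirely by never trying to realize each $c$ near $[\eta]$ as the class of a leaf a priori. It first uses Theorem \ref{teo1} on $\mathcal{G}_{\eta,u}$ to get $\mathcal{G}_{\eta,u}=\mathcal{A}_{\left[\eta\right]}^{\ast}=\mathcal{N}_{\left[\eta\right]}^{\ast}$, then invokes the upper semicontinuity of the map $c\mapsto\mathcal{N}_{c}^{\ast}$ (\cite{arn1}, Proposition 13) to find $U_{0}$ with $\mathcal{N}_{c}^{\ast}\subset\mathcal{V}$ for all $c\in U_{0}$. For such $c_{0}$, the connectedness of the Ma\~n\'e set forces $\mathcal{N}_{c_{0}}^{\ast}$ to lie in a single leaf $\mathcal{G}_{\omega,v}$, and then Lemma \ref{lem4} (applied to the absorbing leaf $\mathcal{G}_{\omega,v}$) yields $c_{0}=\left[\omega\right]$ \emph{a posteriori}; Theorem \ref{teo1} then gives differentiability on $\partial\alpha\left(c_{0}\right)$. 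The ingredients you would need to add to your proof are exactly these three: upper semicontinuity of the Ma\~n\'e set, its connectedness, and the first assertion of Lemma \ref{lem4} ($\mathcal{N}_{\left[\omega'\right]}^{\ast}\cap\mathcal{G}_{\omega,v}\neq\emptyset$ implies $\left[\omega'\right]=\left[\omega\right]$ for an absorbing graph). Your observations that each leaf is absorbing and that Theorem \ref{teo1} then gives both $\mathcal{A}_{\left[\eta'\right]}^{\ast}=\mathcal{G}_{\eta',u'}$ and differentiability on $\partial\alpha\left(\left[\eta'\right]\right)$ are correct and agree with the paper; only the passage from the foliation to the neighborhood $U_{0}$ needs to be replaced.
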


We prove the converse of Theorem \ref{teo2} in the case $M$ equals torus $%
\mathbb{T}^{2}$ (see Theorem \ref{teo4}). In this case, the set $%
V=\bigcup\nolimits_{c\in U_{0}}\partial \alpha \left( c\right) ,$ obtained
in the above statement, is open in $H_{1}\left( \mathbb{T}^{2};%
\mathbb{R}
\right) $. Then we geralize (\cite{sor1}, Theorem 3) to local case.

We also give a particular attention to existence of neighborhood contained
in the tiered Mañé, introduced by M-C. Arnaud in \cite{arn1}, and its
relation with the local integrability of system and therefore with the local
differentiability of $\beta .$ Indeed, we prove a local version of a result
of M-C. Arnaud (See \cite{arn2}, Theorem 1), in the Section \ref{sec1},
Corollary \ref{coro2}.

\section{Preliminaries\label{sec2}}

Let $M$ be a compact connected manifold and $TM$ its tangent bundle. A
Tonelli's Lagrangian is a function $L:TM\rightarrow
\mathbb{R}
$ of class at least $C^{2}$ which is convex and superlinear. Let us recall
the main concepts introduced by Mather in \cite{mat1}. Let $\mathcal{M}%
\left( L\right) $ be the set of probabilities on the Borel $\sigma $-algebra
on $TM$ which are invariant under the Euler--Lagrange flow $\varphi _{t}^{L}$%
. The Euler Lagrange flow generated by $L$ does not change by adding a
closed one form $\eta $ and the action of a probability measure $\mu \in
\mathcal{M}\left( L\right) $, defined by%
\begin{equation*}
A_{L-c}\left( \mu \right) =\int_{TM}\left( L-\eta \right) d\mu
\end{equation*}%
depends only on the cohomology class $c=[\eta ]\in H^{1}(M;{\mathbb{R}})$.

The minimal action value, which also depends only on the cohomology class $%
c=[\eta ],$ is denoted by $-\alpha (c),$ that is:%
\begin{equation*}
\alpha (c)=-\inf_{\mu \in \mathcal{M}\left( L\right) }A_{L-c}\left( \mu
\right) .
\end{equation*}%
Mather proved that the function $c\mapsto \alpha \left( c\right) ,$
so-called $\alpha $\textit{\ of Mather function}, is convex and superlinear.
It is known that $\alpha (c)$ is the energy level that contains the \textit{%
Mather set for the cohomology class }$c$:%
\begin{equation*}
{\widetilde{\mathcal{M}}}_{c}=\overline{\bigcup_{\mu }\supp(\mu )},
\end{equation*}%
where the union is taken over the set of Borel probability measures $\mu \in
\mathcal{M}\left( L\right) $ called $c$\textit{-minimizing,} i.e. $\alpha
(c)=-A_{L-c}\left( \mu \right) .$ The set ${\widetilde{\mathcal{M}}}_{c}$ is
a compact invariant set which is a graph over a compact subset $\mathcal{M}%
_{c}$ of $M$, the projected Mather set (see \cite{mat1}). $\mathcal{M}_{c}$
is laminated by curves, which are global (or time independent) minimizers.

Given a probability measure $\mu \in \mathcal{M}\left( L\right) $, its
\textit{homology} or its \textit{rotation vector} is defined as the unique $%
\rho \left( \mu \right) \in H_{1}(M;{\mathbb{R}})$ such that
\begin{equation*}
\left\langle \rho (%
\mu
),\left[ \omega \right] \right\rangle =\int_{TM}\omega d\mu ,
\end{equation*}%
for all closed 1-forms $\omega $ on $M.$ By convexity, we can consider the
dual Fenchel of $\alpha $, called $\beta $ \textit{of Mather function}, as%
\begin{equation*}
\beta \left( h\right) =\inf_{\rho (%
\mu
)=h}A_{L}\left( \mu \right) .
\end{equation*}%
Mather also proved that the $\beta $ function is convex and superlinear. We
say that a measure $\mu \in \mathcal{M}\left( L\right) $ with $\rho \left(
\mu \right) =h$ is $h$\textit{-minimizing }if $\beta \left( h\right)
=A_{L}\left( \mu \right) .$ The set $\widetilde{\mathcal{M}}^{h}$ is the
union of supports of probability measures $h$-minimizing.

In general, the maps $\alpha $ and $\beta $ are neither strictly convex, nor
differentiable. The projection on domain of regions of graph where either
map is affine are called flats. Actually, if the map is strictly convex at a
point, the flat is this only point and if the map is not strictly convex,
the flat is non-trivial. By duality we have the inequality%
\begin{equation*}
\alpha (c)+\beta \left( h\right) \geq \left\langle c,h\right\rangle ,\forall
c\in H^{1}(M;{\mathbb{R}}),\forall h\in H_{1}(M;{\mathbb{R}}),
\end{equation*}%
called Fenchel inequality. Given $c\in H^{1}(M;{\mathbb{R}})$ (resp. $h\in
H_{1}(M;{\mathbb{R}})$), the homology class $h\in H_{1}(M;{\mathbb{R}})$
(resp. $c\in H^{1}(M;{\mathbb{R}})$) achieving equality in the Fenchel
inequality is called subderivative of $\alpha $ in $c$ (resp. subderivative
of $\beta $ in $h$)$.$ The set composed by subderivatives of $\alpha $ in $c$
(resp. subderivatives of $\beta $ in $h$) is called Legendre transform of $c$
(resp. $h$), and denoted $\partial \alpha \left( c\right) $ (resp. $\partial
\beta \left( h\right) $). Therefore, $\partial \alpha \left( c\right) $ is a
flat of $\beta $ and $\partial \beta \left( h\right) $ is a flat of $\alpha
. $ By convexity, the sets $\partial \alpha \left( c\right) $ and $\partial
\beta \left( h\right) $ are non-empty.

Many interesting properties of the Euler--Lagrange flow can be derived from
the study of the behaviour of the $\beta $-function. For instance, if $h$ is
an\textit{\ extremal point} of $\beta $-function, i.e. $h$ is not convex
combination of two elements in a same flat of $\beta ,$ then there exist
ergodic measures with homology $h$ (see \cite{man1}).

Let us recall that we can associate to such a Tonelli's Lagrangian $L$ the
Hamiltonian function $H:T^{\ast }M\rightarrow
\mathbb{R}
$ via Legendre transform $\mathcal{L}:TM\rightarrow T^{\ast }M,$ which under
our assumption, is a diffeomorphism of class at least $C^{1}$, defined in
coordinates by%
\begin{equation*}
\mathcal{L}\left( x,v\right) =\left( x,\frac{\partial L}{\partial v}\left(
x,v\right) \right) .
\end{equation*}%
Actually, $H$ is the dual Fenchel of $L$ and also is convex and superlinear.
Given a cohomology class $c$ and a closed 1-form $\eta _{c}$ with $\left[
\eta _{c}\right] =c,$ we consider the Hamilton-Jacobi equation%
\begin{equation}
H\left( x,\eta _{c}\left( x\right) +d_{x}u\right) =\alpha (c).  \tag{HJ}
\end{equation}%
A Lipschitz function $u:M\rightarrow
\mathbb{R}
$ is called a subsolution of Hamilton-Jacobi for the Lagrangian $L-c$ if for
some closed 1-form $\eta _{c}$ with $\left[ \eta _{c}\right] =c,$ we have
\begin{equation}
H\left( x,\eta _{c}\left( x\right) +d_{x}u\right) \leq \alpha (c),
\label{fo5}
\end{equation}%
at almost every point. Note that this definition is equivalent to the notion
of viscosity subsolutions (see \cite{fa1}). We denote by $C^{1,1}$ the set
of differentiable functions with Lipschitz differential. Observe that a $%
C^{1,1}$ function $u$ is solution of (HJ) if and only if the graph of $\eta
_{c}+du,$ denoted by $\mathcal{G}_{\eta _{c},u},$ is invariant under
Hamiltonian flow.

We now recall the definition of calibrated curves (see \cite{fa1}). If $%
u:M\rightarrow
\mathbb{R}
$ is a subsolution of Hamilton-Jacobi for $L-c$, we say that the curve $%
\gamma :I\rightarrow M$ is $\left( u,L-c,\alpha \left( c\right) \right) $%
-calibrated if, for the representative $\eta _{c}$ of the cohomology class $%
c $ given in $\left( \ref{fo5}\right) $, we have the equality%
\begin{equation*}
u\left( \gamma \left( t\right) \right) -u\left( \gamma \left( t^{\prime
}\right) \right) =\int\nolimits_{t^{\prime }}^{t}L\left( \gamma \left(
s\right) ,\dot{\gamma}\left( s\right) \right) -\eta _{c}\left( \dot{\gamma}%
\left( s\right) \right) +\alpha \left( c\right) ds,
\end{equation*}%
for all $t^{\prime },t\in I.$ The subset $\widetilde{\mathcal{I}}_{c}(u)$ of
$TM$ is defined by%
\begin{equation*}
\widetilde{\mathcal{I}}_{c}(u)=\left\{ \left( x,v\right) \in TM:\gamma
_{\left( x,v\right) }\text{ is }\left( u,L-c,\alpha \left( c\right) \right)
\text{-calibrated}\right\} ,
\end{equation*}%
where $\gamma _{\left( x,v\right) }=\pi \circ \varphi _{t}^{L}\left(
x,v\right) $. The set $\widetilde{\mathcal{I}}_{c}(u)$ is invariant and the
curves contained in it are called \textit{curves }$c$\textit{-minimizing}.

Using the sets $\widetilde{\mathcal{I}}_{c}(u)$, one can give (see \cite{fa2}%
) the following characterization of the Mañé set and of the Aubry set:%
\begin{equation*}
\widetilde{\mathcal{N}}_{c}=\bigcup\limits_{u\in SS_{c}}\widetilde{\mathcal{I%
}}_{c}\left( u\right) \text{ and }\widetilde{\mathcal{A}}_{c}=\bigcap%
\limits_{u\in SS_{c}}\widetilde{\mathcal{I}}_{c}\left( u\right) ,
\end{equation*}%
where $SS_{c}$ is the set of subsolution of (HJ) for $L-c$. These invariant
sets contain the Mather set and have interesting dynamical properties, for
instance $\widetilde{\mathcal{A}}_{c}$ also is graph whose projection is
laminated by global minimizers and it is chain recurrent. The Mañé set $%
\widetilde{\mathcal{N}}_{c}$ is connected and chain transitive (see for
instance \cite{gon2}).

Using the duality between Lagrangian and Hamiltonian, via Legendre
transform, we define the sets of Mather, Aubry and Mañé in the cotangent
bundle, respectively by%
\begin{equation*}
\mathcal{M}_{c}^{\ast }=\mathcal{L}\left( \widetilde{\mathcal{M}}_{c}\right)
,\mathcal{A}_{c}^{\ast }=\mathcal{L}\left( \widetilde{\mathcal{A}}%
_{c}\right) \text{ e }\mathcal{N}_{c}^{\ast }=\mathcal{L}\left( \widetilde{%
\mathcal{N}}_{c}\right) .
\end{equation*}%
One useful way to produce invariant Lipschitz Lagrangian graphs is to show
that $\pi \left( \mathcal{A}_{c}^{\ast }\right) =M.$ If this is the case,
the Theorem 2.5 of \cite{fa2} says that there exists an unique solution $u$
of (HJ) for the Lagrangian $L-c$ which is $C^{1,1}$ and such that $\mathcal{A%
}_{c}^{\ast }$ is the graph of $\eta _{c}+du,$ for some $\eta _{c}$
representative of cohomology class $c.$

\section{Absorbing sets\label{sec3}}

If $u:M\rightarrow
\mathbb{R}
$ is a subsolution of (HJ) for $L-c$, we denote by $\widetilde{\mathcal{I}}%
_{c}^{+}\left( u\right) $ the subset of $TM$ defined as
\begin{equation*}
\widetilde{\mathcal{I}}_{c}^{+}\left( u\right) =\left\{ \left( x,v\right)
:\gamma _{\left( x,v\right) }|_{\left[ 0,+\infty \right) }\text{ is }\left(
u,L-c,\alpha \left( c\right) \right) \text{-calibrated}\right\} ,
\end{equation*}%
where $\gamma _{\left( x,v\right) }$ is the curve defined on $%
\mathbb{R}
$ by%
\begin{equation*}
\gamma _{\left( x,v\right) }\left( t\right) =\pi \circ \varphi
_{t}^{L}\left( x,v\right) .
\end{equation*}%
The \textit{forward Mañe set} is defined by%
\begin{equation*}
\widetilde{\mathcal{N}}_{c}^{+}=\bigcup\limits_{u\in SS_{c}}\widetilde{%
\mathcal{I}}_{c}^{+}\left( u\right) ,
\end{equation*}%
where $SS_{c}$ is the set of critical subsolutions for the Lagrangian $L-c.$
We define the \textit{forward tiered Mañé set} as the union of all forward Ma%
ñe sets, i.e. the subset of $TM$ given by
\begin{equation*}
\mathcal{N}_{+}^{T}\left( L\right) =\bigcup\limits_{c\in H^{1}\left(
M\right) }\widetilde{\mathcal{N}}_{c}^{+}.
\end{equation*}

\begin{definition}
We say that an invariant set $\Lambda \subset TM$ is an absorbing set if for
all $\left( x,v\right) \in \mathcal{N}_{+}^{T}\left( L\right) $ we have
\begin{equation*}
\omega \left( x,v\right) \subset \Lambda \Rightarrow \left( x,v\right) \in
\Lambda
\end{equation*}
\end{definition}

\begin{definition}
\label{def1}Let $\mathcal{G\subset }T^{\ast }M$ be an invariant Lipschitz
Lagrangian graph. We say that $\mathcal{G}$ is an absorbing graph if $%
\mathcal{L}^{-1}\left( \mathcal{G}\right) $ is an absorbing set.
\end{definition}

\begin{lemma}
\label{lem3}If $\left( x,v\right) \in \widetilde{\mathcal{N}}_{c}^{+},$ then
the $\omega $-limit set $\omega \left( x,v\right) $ is contained in $%
\widetilde{\mathcal{A}}_{c}.$
\end{lemma}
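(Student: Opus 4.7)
The plan is to upgrade the forward calibration along $\gamma_{(x,v)}$ to a two-sided calibration at every $\omega$-limit point, and then to exploit the invariance of $\omega(x,v)$ to show that such a point is calibrated for \emph{every} subsolution, which by the characterization $\widetilde{\mathcal{A}}_c=\bigcap_{u\in SS_c}\widetilde{\mathcal{I}}_c(u)$ places it in $\widetilde{\mathcal{A}}_c$.

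First, I would fix $u\in SS_c$ witnessing the forward calibration of $\gamma_{(x,v)}|_{[0,\infty)}$, fix $p\in\omega(x,v)$, and choose $\tau_n\to+\infty$ with $\varphi_{\tau_n}^L(x,v)\to p$. For any $t_1\le t_2$ and $n$ large enough that $\tau_n+t_1\ge 0$, the $(u,L-c,\alpha(c))$-calibration identity holds on the interval $[\tau_n+t_1,\tau_n+t_2]$. Passing to the limit using continuity of $u$ and continuous dependence of the Euler--Lagrange flow on initial conditions shows that $\gamma_p$ is $(u,L-c,\alpha(c))$-calibrated on all of $\mathbb{R}$.

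Next, for any $w\in SS_c$, after writing $u$ and $w$ with respect to the same representative $\eta_c$ of $c$ (absorbing the difference of one-forms into one of the potentials), subtracting the $w$-subsolution inequality from the $u$-calibration equality along $\gamma_p$ yields
\begin{equation*}
(w-u)(\gamma_p(t_2))\le (w-u)(\gamma_p(t_1)) \quad\text{for all } t_1\le t_2,
\end{equation*}
so $(w-u)\circ\gamma_p$ is non-increasing. For the reverse inequality, I use that $\omega(x,v)$ is invariant, hence both $\gamma_p(t_1),\gamma_p(t_2)\in\omega(x,v)$: there therefore exist times $s_n=\tau_n+t_2\to+\infty$ and $s'_n=\tau_{m(n)}+t_1>s_n$ along the forward orbit of $(x,v)$ with $\varphi_{s_n}^L(x,v)\to\gamma_p(t_2)$ and $\varphi_{s'_n}^L(x,v)\to\gamma_p(t_1)$ (including velocities). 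Performing the same subtraction of the two inequalities on $[s_n,s'_n]\subset[0,\infty)$ and letting $n\to\infty$ produces $(w-u)(\gamma_p(t_1))\le(w-u)(\gamma_p(t_2))$.

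Combining both inequalities forces $(w-u)\circ\gamma_p$ to be constant, and together with the $u$-calibration identity this immediately yields the calibration identity for $w$. Since $w\in SS_c$ was arbitrary, $p\in\widetilde{\mathcal{A}}_c$, proving the lemma. The step that demands the most care is the reverse inequality: it reduces to verifying that one can interlace the times $s_n<s'_n$ so that the orbit visits an arbitrarily small neighborhood of $\gamma_p(t_2)$ before visiting one of $\gamma_p(t_1)$, which is automatic from the fact that the entire curve $\gamma_p$ lies in $\omega(x,v)$ together with continuous dependence of $\varphi^L$ on the initial data.
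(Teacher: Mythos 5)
Your argument is correct and is essentially the proof in the paper: the paper likewise passes the calibration identity to the limit curve and compares an arbitrary subsolution with the calibrating one $\overline{u}$ over long orbit segments between two successive near-returns (its interlaced subsequences with $t_m+s<t_k-s$ play exactly the role of your times $s_n<s'_n$, and the representative change via a potential $V$ appears there too). The only difference is presentational: the paper runs everything as a single chain of action inequalities rather than isolating the two monotonicity statements for $(w-u)\circ\gamma_p$.
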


\begin{proof}
In fact, let $\left( x,v\right) \in \widetilde{\mathcal{N}}_{c}^{+}$ and $%
\gamma :%
\mathbb{R}
\rightarrow M\ $the projection of Euler-Lagrange solution $\gamma \left(
t\right) =\pi \circ \varphi _{t}^{L}\left( x,v\right) $ curve such that $%
\gamma |_{\left[ 0,+\infty \right) }$ is $\left( \overline{u},L-c,\alpha
\left( c\right) \right) $-calibrated for some $\overline{u}\in $ $SS_{c}.$
This means that there exists a closed 1-form $\eta _{c}$ with $\left[ \eta
_{c}\right] =c$ such that $H\left( x,\eta _{c}\left( x\right) +d_{x}%
\overline{u}\right) \leq \alpha \left( c\right) $ at almost every point and
\begin{equation*}
\overline{u}\left( \gamma \left( t\right) \right) -\overline{u}\left( \gamma
\left( t^{\prime }\right) \right) =A_{L-\eta _{c}+\alpha \left( c\right)
}\left( \gamma |_{\left[ t^{\prime },t\right] }\right) \text{ for all }%
0<t^{\prime }<t.
\end{equation*}%
Let $\left( y,z\right) \in \omega \left( x,v\right) ,$ i.e. $\left(
y,z\right) =\lim_{n\rightarrow \infty }\left( \gamma ,\dot{\gamma}\right)
\left( t_{n}\right) $ with $t_{n}\rightarrow \infty .$

Let us consider $\sigma :%
\mathbb{R}
\rightarrow M\ $the projection of the Euler-Lagrange solution $\sigma \left(
t\right) =\pi \circ \varphi _{t}^{L}\left( y,z\right) .$ Given a
Hamilton-Jacobi subsolution $u:M\rightarrow
\mathbb{R}
$ belonging to $SS_{c}$, there exists a closed 1-form $\xi _{c}$ with $\left[
\xi _{c}\right] =c$ and $H\left( x,\xi _{c}\left( x\right) +d_{x}u\right)
\leq \alpha \left( c\right) $ at almost every point. It follows from (\cite%
{fa1}, Proposition 4.2.3) that $u$ satisfies
\begin{equation*}
u\left( \gamma \left( t\right) \right) -u\left( \gamma \left( t^{\prime
}\right) \right) \leq A_{L-\xi _{c}+\alpha \left( c\right) }\left( \gamma
|_{ \left[ t^{\prime },t\right] }\right) ,\forall t^{\prime }<t.
\end{equation*}%
We can consider $V$ a $C^{\infty }\left( M\right) $ function such that $\eta
_{c}=\xi _{c}+dV.$ Thus, if $s>0$ and $\left( t_{k}\right) $ and $\left(
t_{m}\right) $ are two subsequences of $\left( t_{n}\right) $ such that $%
t_{k}-s>t_{m}+s$ and $t_{k},t_{m}>s,$ we have%
\begin{align*}
& A_{L-\xi _{c}+\alpha \left( c\right) }\left( \sigma |_{\left[ -s,s\right]
}\right) +u\left( \sigma \left( -s\right) \right) -u\left( \sigma \left(
s\right) \right) \\
& =\lim_{m,k}\left[ A_{L-\xi _{c}+\alpha \left( c\right) }\left( \gamma |_{%
\left[ t_{m}-s,t_{m}+s\right] }\right) +u\left( \gamma \left( t_{k}-s\right)
\right) -u\left( \gamma \left( t_{m}+s\right) \right) \right] \\
& \leq \lim_{m,k}\left[ A_{L-\xi _{c}+\alpha \left( c\right) }\left( \gamma
|_{\left[ t_{m}-s,t_{m}+s\right] }\right) +A_{L-\xi _{c}+\alpha \left(
c\right) }\left( \gamma |_{\left[ t_{m}+s,t_{k}-s\right] }\right) \right] \\
& =\lim_{m,k}A_{L-\xi _{c}+\alpha \left( c\right) }\left( \gamma |_{\left[
t_{m}-s,t_{k}-s\right] }\right) \\
& =\lim_{m,k}\left[ A_{L-\xi _{c}-dV+\alpha \left( c\right) }\left( \gamma
|_{\left[ t_{m}-s,t_{k}-s\right] }\right) +V\left( \gamma \left(
t_{k}-s\right) \right) -V\left( \gamma \left( t_{m}-s\right) \right) \right]
\\
& =\lim_{m,k}A_{L-\eta _{c}+\alpha \left( c\right) }\left( \gamma |_{\left[
t_{m}-s,t_{k}-s\right] }\right) +V\left( \gamma \left( -s\right) \right)
-V\left( \gamma \left( -s\right) \right) \\
& =\lim_{m,k}\overline{u}\left( \gamma \left( t_{k}-s\right) \right) -%
\overline{u}\left( \gamma \left( t_{m}-s\right) \right) \\
& =\overline{u}\left( \gamma \left( -s\right) \right) -\overline{u}\left(
\gamma \left( -s\right) \right) =0.
\end{align*}%
Therefore%
\begin{equation*}
A_{L-\xi _{c}+\alpha \left( c\right) }\left( \sigma |_{\left[ -s,s\right]
}\right) \leq u\left( \sigma \left( s\right) \right) -u\left( \sigma \left(
-s\right) \right) .
\end{equation*}%
The opposite inequality holds because $u$ is a Hamilton-Jacobi subsolution
(see \cite{fa1}, Proposition 4.2.3). This shows that
\begin{equation*}
\left( y,z\right) \in \bigcap\limits_{u\in SS_{c}}\widetilde{\mathcal{I}}%
_{c}\left( u\right) =\widetilde{\mathcal{A}}_{c}.
\end{equation*}
\end{proof}

Examples of absorbing graphs are the so-called \textit{Schwartzman strictly
ergodic graphs }(see \cite{fa3}), i.e. invariant graphs $\Lambda $ which
support an invariant measure with full support. In fact, let $\mu $ the
invariant measure supported in $\Lambda $ with $\rho \left( \mu \right) =h.$
If $\omega \left( x,p\right) \subset \Lambda $ for some $\left( x,p\right)
\in \mathcal{L}\left( \widetilde{\mathcal{I}}_{c}^{+}\left( L\right) \right)
,$ then it follows from above lemma that $\omega \left( x,p\right) \subset
\mathcal{A}_{c}^{\ast }\cap \Lambda .$ In particular,
\begin{equation*}
\omega \left( x,p\right) \subset \mathcal{L}\left( \widetilde{\mathcal{M}}%
^{h}\right) \cap \mathcal{A}_{c}^{\ast }.
\end{equation*}%
Since $\pi \left( \widetilde{\mathcal{M}}^{h}\right) =M,$ we have $\mathcal{A%
}_{c}^{\ast }=\mathcal{L}\left( \widetilde{\mathcal{M}}^{h}\right) =\Lambda
. $ By the graph property, we conclude that $\left( x,p\right) \in \Lambda .$

Actually, the same argument can be used to prove that an invariant graph $%
\Lambda $ such that all invariant probability measures with support
contained in $\Lambda $ have the same rotation vector $h$ and the union of
their supports equals $\Lambda ,$ also it is absorbing.

\begin{proposition}
\label{pro3}Let $\Lambda \subset TM$ be an invariant absorbing set. If $%
\widetilde{\mathcal{A}}_{c}\subset \Lambda $ for some $c\in H^{1}\left( M;%
\mathbb{R}
\right) ,$ then $\Lambda $ projects onto the whole manifold $M.$
\end{proposition}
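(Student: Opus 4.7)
The plan is a direct three-step argument: through an arbitrary point $x_0 \in M$ I would exhibit a tangent vector lying in the forward tiered Mañé set $\mathcal{N}_{+}^{T}(L)$, then apply Lemma \ref{lem3} to trap its $\omega$-limit inside $\widetilde{\mathcal{A}}_c$, and finally invoke the absorbing property to pull the vector itself back into $\Lambda$. Since $x_0$ is arbitrary, this yields $\pi(\Lambda)=M$.

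In detail, fix $x_0 \in M$. The first step uses Fathi's weak KAM theory: there exists a forward (positive) weak KAM solution $u^{+}:M\to\mathbb{R}$ of (HJ) for $L-c$, and it is in particular an element of $SS_c$. A classical property of such a $u^{+}$ is that through \emph{every} point of $M$ there passes a $(u^{+},L-c,\alpha(c))$-calibrated curve defined on $[0,+\infty)$ (see \cite{fa1}). Pick such a curve $\gamma:[0,+\infty)\to M$ with $\gamma(0)=x_0$, and set $v_0=\dot{\gamma}(0)$. Then by the very definition of $\widetilde{\mathcal{I}}_{c}^{+}(u^{+})$,
$$ (x_0,v_0)\in \widetilde{\mathcal{I}}_{c}^{+}(u^{+})\subset \widetilde{\mathcal{N}}_{c}^{+}\subset \mathcal{N}_{+}^{T}(L). $$

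Now Lemma \ref{lem3} gives $\omega(x_0,v_0)\subset \widetilde{\mathcal{A}}_c$, which by hypothesis is contained in $\Lambda$. Since $(x_0,v_0)\in \mathcal{N}_{+}^{T}(L)$ and $\Lambda$ is absorbing, the defining implication forces $(x_0,v_0)\in \Lambda$, whence $x_0=\pi(x_0,v_0)\in \pi(\Lambda)$.

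The only non-routine ingredient beyond Lemma \ref{lem3} itself is the combined input that a forward weak KAM solution exists \emph{and} admits a forward calibrated curve issuing from every base point; both are standard consequences of Fathi's weak KAM theorem, so no real obstacle appears. If I wanted to avoid invoking forward weak KAM solutions explicitly, an alternative would be to use the semigroup $T_t^{+}$ applied to any critical subsolution and extract a limiting calibrated orbit at $x_0$ by the usual Arzelà–Ascoli argument; either route funnels the problem into the same one-line application of Lemma \ref{lem3} plus the absorbing definition.
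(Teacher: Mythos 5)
Your proof is correct and follows essentially the same route as the paper's own argument: existence of a positive weak KAM solution, a forward calibrated curve through each point placing a vector in $\widetilde{\mathcal{N}}_{c}^{+}$, then Lemma \ref{lem3} plus the absorbing property. No substantive difference.
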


\begin{proof}
Let us consider a closed 1-form $\eta _{c}$ representative of the cohomology
class $c.$ It follows from (\cite{fa1}, Theorem 4.9.3) that there exists a
weak KAM of positive type $u_{+}$ for the Lagrangian $L-\eta _{c}$. Then $%
u_{+}$ is subsolution of (HJ) and given $x\in M$ we can find a $C^{1}$ curve
$\gamma _{x}:\left[ 0,\infty \right) \rightarrow M$ with $\gamma _{x}\left(
0\right) =x,$ which is $\left( u_{+},L-\eta _{c},\alpha \left( c\right)
\right) $-calibrated. This means that for all $0<t^{\prime }<t$ holds%
\begin{equation*}
u_{+}\left( \gamma _{x}\left( t\right) \right) -u_{+}\left( \gamma
_{x}\left( t^{\prime }\right) \right) =\int_{t^{\prime }}^{t}L\left( \gamma
_{x}\left( s\right) ,\dot{\gamma}_{x}\left( s\right) \right) -\eta
_{c}\left( \dot{\gamma}_{x}\left( s\right) \right) +\alpha \left( c\right)
ds.
\end{equation*}%
Therefore $\left( \gamma _{x},\dot{\gamma}_{x}\right) \in \widetilde{%
\mathcal{I}}_{c}^{+}\left( u_{+}\right) \subset \widetilde{\mathcal{N}}%
_{c}^{+}$ and, by Lemma \ref{lem3}, we have that the $\omega $-limit set of $%
\left( \gamma _{x},\dot{\gamma}_{x}\right) $ is contained in Aubry set $%
\widetilde{\mathcal{A}}_{c}.$ Since $\Lambda $ is an absorbing set that
contains $\widetilde{\mathcal{A}}_{c},$ we obtain $\left( x,\dot{\gamma}%
_{x}\left( 0\right) \right) \in \Lambda .$
\end{proof}

\begin{lemma}
\label{lem1}Let $c\in H^{1}\left( M;%
\mathbb{R}
\right) $ and $h\in H_{1}\left( M;%
\mathbb{R}
\right) .$ We have $\widetilde{\mathcal{M}}^{h}\subset \widetilde{\mathcal{M}%
}_{c}$ if and only if $c\in \partial \beta \left( h\right) .$
\end{lemma}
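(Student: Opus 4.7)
The plan is to reduce both implications to the Fenchel equality $\alpha(c)+\beta(h)=\langle c,h\rangle$, which by definition characterizes $c\in\partial\beta(h)$.

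For the implication $c\in\partial\beta(h)\Rightarrow \widetilde{\mathcal{M}}^{h}\subset \widetilde{\mathcal{M}}_{c}$, the argument is a direct computation. I would take any $h$-minimizing measure $\mu$ (so $\rho(\mu)=h$ and $A_{L}(\mu)=\beta(h)$), fix a closed one-form $\eta_{c}$ with $[\eta_{c}]=c$, and use the defining property of the rotation vector to write
\begin{equation*}
A_{L-c}(\mu)=A_{L}(\mu)-\int_{TM}\eta_{c}\,d\mu = \beta(h)-\langle c,h\rangle = -\alpha(c),
\end{equation*}
where the last equality uses the assumed Fenchel equality. Hence $\mu$ is $c$-minimizing, so $\supp(\mu)\subset \widetilde{\mathcal{M}}_{c}$; taking the closed union over all such $\mu$ gives $\widetilde{\mathcal{M}}^{h}\subset \widetilde{\mathcal{M}}_{c}$.

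For the converse, I would pick any $h$-minimizing measure $\mu$ (which exists since $\beta$ is finite and attained). By hypothesis $\supp(\mu)\subset \widetilde{\mathcal{M}}_{c}$, so it suffices to show: \emph{every invariant probability measure supported in $\widetilde{\mathcal{M}}_{c}$ is $c$-minimizing}. Granting this, one concludes $A_{L-c}(\mu)=-\alpha(c)$, and reversing the computation above yields $\alpha(c)+\beta(h)=\langle c,h\rangle$, i.e.\ $c\in\partial\beta(h)$.

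To establish that intermediate fact, I would use a critical subsolution $u\in SS_{c}$ (for instance a weak KAM). Every orbit in $\widetilde{\mathcal{M}}_{c}$ is a global $c$-minimizer and its projection is $(u,L-c,\alpha(c))$-calibrated, so for $(x,v)\in \widetilde{\mathcal{M}}_{c}$ with $\gamma=\gamma_{(x,v)}$,
\begin{equation*}
\int_{0}^{T}\bigl(L(\gamma,\dot{\gamma})-\eta_{c}(\dot{\gamma})+\alpha(c)\bigr)ds = u(\gamma(T))-u(x),
\end{equation*}
which is bounded in $T$. Dividing by $T$, applying Birkhoff's theorem to an ergodic $\mu$ supported in $\widetilde{\mathcal{M}}_{c}$, and reducing the general case via ergodic decomposition (since each ergodic component of an invariant measure on $\widetilde{\mathcal{M}}_{c}$ is itself supported there) yields $\int(L-\eta_{c}+\alpha(c))\,d\mu=0$, as desired. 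The main obstacle is not technical but conceptual: isolating this calibration-plus-Birkhoff argument cleanly, since the rest of the proof is a bookkeeping rearrangement of the Fenchel identity.
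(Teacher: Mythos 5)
Your proof is correct and follows essentially the same route as the paper: both directions reduce to the Fenchel identity via the computation $A_{L-c}(\mu)=\beta(h)-\langle c,h\rangle$ for an $h$-minimizing $\mu$. The only difference is that you explicitly justify, via calibration, Birkhoff's theorem and ergodic decomposition, the fact that every invariant measure supported in $\widetilde{\mathcal{M}}_{c}$ is $c$-minimizing, a step the paper uses without proof when it asserts the existence of a $c$-minimizing measure with rotation vector $h$.
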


\begin{proof}
If $\widetilde{\mathcal{M}}^{h}\subset \widetilde{\mathcal{M}}_{c},$ then
there exists a $c$-minimizing measure $\mu $ with $\rho \left( \mu \right)
=h.$ So%
\begin{equation*}
-\alpha \left( c\right) =\int\nolimits_{TM}\left( L-\eta _{c}\right) d\mu
=\int\nolimits_{TM}Ld\mu -\left\langle c,h\right\rangle =\beta \left(
h\right) -\left\langle c,h\right\rangle ,
\end{equation*}%
where $\eta _{c}$ is a representative of the cohomology class $c.$ This show
that $c\in \partial \beta \left( h\right) .$

Conversely, let $\mu $ minimizing measure with rotation vector $h$. If $c\in
\partial \beta \left( h\right) ,$ then $\beta \left( h\right) =\left\langle
c,h\right\rangle -\alpha \left( c\right) .$ Therefore
\begin{equation*}
-\alpha \left( c\right) =\beta \left( h\right) -\left\langle
c,h\right\rangle =\int\nolimits_{TM}Ld\mu -\left\langle c,\rho \left( \mu
\right) \right\rangle =\int\nolimits_{TM}\left( L-\eta _{c}\right) d\mu .
\end{equation*}%
This proves that $%
\mu
$ is $c$-minimizing.
\end{proof}

\section{Proof of Theorems \protect\ref{teo1} and \protect\ref{teo2} \label%
{sec4}}

In order to prove Theorems \ref{teo1} and \ref{teo2} we begin by proving the
following lemma:

\begin{lemma}
\label{lem4}Let $\mathcal{G}_{\eta ,u}$ be an invariant Lipschitz Lagrangian
absorbing graph. Then $\mathcal{N}_{\left[ \omega \right] }^{\ast }\cap
\mathcal{G}_{\eta ,u}\neq \emptyset $ if and only if $\left[ \omega \right] =%
\left[ \eta \right] .$ Moreover, $\mathcal{A}_{\left[ \eta \right] }^{\ast }=%
\mathcal{G}_{\eta ,u}.$
\end{lemma}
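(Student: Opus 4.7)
The plan is to establish the ``moreover'' identity $\mathcal{A}_{[\eta]}^{\ast}=\mathcal{G}_{\eta,u}$ first and then deduce the equivalence from it. Invariance of $\mathcal{G}_{\eta,u}$ under the Hamiltonian flow forces $u$ to be a $C^{1,1}$ solution of (HJ) for $L-\eta$. Since $u$ is differentiable at every point of $M$, the standard Fathi argument gives the easy inclusion $\widetilde{\mathcal{A}}_{[\eta]}\subset\mathcal{L}^{-1}(\mathcal{G}_{\eta,u})$. For the reverse inclusion I would first show that the forward Ma\~n\'e set equals the graph: given any $x\in M$, the positive weak KAM solution $u_{+}$ for $L-\eta$ (\cite{fa1}, Theorem~4.9.3) produces a forward-calibrated curve $\gamma_{x}$ with $\gamma_{x}(0)=x$, so $(x,\dot{\gamma}_{x}(0))\in\widetilde{\mathcal{N}}_{[\eta]}^{+}$; Lemma~\ref{lem3} places $\omega(x,\dot{\gamma}_{x}(0))$ in $\widetilde{\mathcal{A}}_{[\eta]}\subset\mathcal{L}^{-1}(\mathcal{G}_{\eta,u})$, and the absorbing hypothesis forces $(x,\dot{\gamma}_{x}(0))\in\mathcal{L}^{-1}(\mathcal{G}_{\eta,u})$. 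Combined with the reverse trivial inclusion (orbits on the graph are $u$-calibrated) this yields $\widetilde{\mathcal{N}}_{[\eta]}^{+}=\mathcal{L}^{-1}(\mathcal{G}_{\eta,u})$, and symmetrically for the backward Ma\~n\'e set. Using that the graph is a single graph over $M$, together with chain transitivity of $\widetilde{\mathcal{N}}_{[\eta]}$ (recalled in the preliminaries) to see that every orbit on the graph must calibrate every subsolution, one concludes $\widetilde{\mathcal{A}}_{[\eta]}=\mathcal{L}^{-1}(\mathcal{G}_{\eta,u})$.

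With this in hand, the implication $[\omega]=[\eta]\Rightarrow\mathcal{N}_{[\omega]}^{\ast}\cap\mathcal{G}_{\eta,u}\neq\emptyset$ is immediate, since then the intersection equals the whole graph. For the converse, pick $(x,p)\in\mathcal{N}_{[\omega]}^{\ast}\cap\mathcal{G}_{\eta,u}$, set $(x,v)=\mathcal{L}^{-1}(x,p)$, and consider the orbit $\gamma(t)=\pi\circ\varphi_{t}^{L}(x,v)$. This orbit stays on the graph, so it is simultaneously $(u,L-\eta,\alpha([\eta]))$-calibrated and $(u_{\omega},L-\omega,\alpha([\omega]))$-calibrated for some subsolution $u_{\omega}$ of (HJ) for $L-\omega$. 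Since a subsolution is differentiable along a curve it calibrates and the Hamilton--Jacobi equation is realised pointwise there, uniqueness of the momentum associated to the Euler--Lagrange velocity yields $\eta_{\gamma(t)}+d_{\gamma(t)}u=\omega_{\gamma(t)}+d_{\gamma(t)}u_{\omega}$ along $\gamma$, hence $\alpha([\omega])=\alpha([\eta])$ and $d(u-u_{\omega})=\omega-\eta$ on the curve.

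The main obstacle is then to conclude $[\omega]=[\eta]$ in $H^{1}(M;\mathbb{R})$, rather than simply that these classes share a common flat of $\alpha$ (which is all the asymptotic-cycle argument along a single orbit would give). The plan is to promote the pointwise identity $\eta+du=\omega+du_{\omega}$ from a single orbit to all of $M$ by replacing $u_{\omega}$ by a weak KAM solution $u_{+}^{\omega}$ for $L-\omega$: the forward-calibrated curves for $u_{+}^{\omega}$ exist at every point of $M$, their $\omega$-limits sit in $\widetilde{\mathcal{A}}_{[\omega]}$, and by the absorbing-plus-Lemma~\ref{lem3} mechanism of the first paragraph one arranges these $\omega$-limits to lie in $\mathcal{L}^{-1}(\mathcal{G}_{\eta,u})$, so the initial points lie on the graph and the identity $\eta+du=\omega+du_{+}^{\omega}$ extends almost everywhere on $M$. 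Since $u-u_{+}^{\omega}$ is Lipschitz, the equality $d(u-u_{+}^{\omega})=\omega-\eta$ holds almost everywhere, so $\omega-\eta$ is exact and $[\omega]=[\eta]$ in cohomology.
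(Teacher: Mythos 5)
Your overall strategy — force calibrated curves onto the graph via Lemma \ref{lem3} plus the absorbing hypothesis, then read off the momentum identity $\eta+du=\omega+dv$ globally to get exactness — is the right one, but the two steps you wave through are precisely where the content of the lemma lies, and as stated they do not go through. The first gap is in your opening paragraph, where you pass from $\widetilde{\mathcal{N}}^{+}_{[\eta]}=\mathcal{L}^{-1}\left(\mathcal{G}_{\eta,u}\right)$ to $\widetilde{\mathcal{A}}_{[\eta]}=\mathcal{L}^{-1}\left(\mathcal{G}_{\eta,u}\right)$ by invoking chain transitivity of the Ma\~n\'e set to claim that every orbit on the graph calibrates every subsolution. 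Chain transitivity gives nothing of the sort: the Ma\~n\'e set is always chain transitive, yet in general it strictly contains the Aubry set (semi-static orbits joining distinct static classes are not static). What the paper actually does is show that every weak KAM solution of positive type has all of its forward calibrated curves on the graph, and then — this is the step you omit entirely — that each such curve $\gamma_{x}$ extends to a calibrated curve on $\left[-T,\infty\right)$ for every $T>0$: one applies the absorbing mechanism again at $z=\gamma_{x}\left(-T\right)$ and uses the graph property to identify the calibrated curve issued from $z$ with the backward continuation of $\gamma_{x}$. Only this makes $u_{+}$ differentiable everywhere with differential read off from the graph, so that any two positive weak KAM solutions differ by a constant, and Proposition 4.4 of \cite{fa2} then yields $\mathcal{A}^{\ast}=\mathcal{N}^{\ast}=$ the graph. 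Without the backward-extension step you cannot even conclude that the projected Aubry set is all of $M$. (A smaller point: your claim that the identity for $\widetilde{\mathcal{N}}^{+}_{[\eta]}$ holds ``symmetrically for the backward Ma\~n\'e set'' is also unsupported, since the absorbing hypothesis constrains only $\omega$-limits of points of the forward tiered Ma\~n\'e set.)

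The second gap is in your last paragraph: to run the absorbing mechanism for a positive weak KAM solution $u_{+}^{\omega}$ of $L-\omega$ you need $\widetilde{\mathcal{A}}_{[\omega]}\subset\mathcal{L}^{-1}\left(\mathcal{G}_{\eta,u}\right)$, but all your single-orbit argument gives is $\mathcal{A}^{\ast}_{[\omega]}\cap\mathcal{A}^{\ast}_{[\eta]}\neq\emptyset$, and there is no a priori reason for the whole Aubry set of $[\omega]$ to sit inside the graph. The paper's fix is the ingredient your plan never uses: by Proposition 6 of \cite{mas1}, the non-empty intersection of Aubry sets places $[\omega]$ and $[\eta]$ in a common flat $F$ of $\alpha$, and for $c$ in the relative interior of $F$ one has $\mathcal{A}^{\ast}_{c}\subset\mathcal{A}^{\ast}_{[\omega]}\cap\mathcal{A}^{\ast}_{[\eta]}\subset\mathcal{G}_{\eta,u}$; the weak KAM/absorbing argument is then run at $c$, showing $\mathcal{A}^{\ast}_{c}$ is a Lipschitz Lagrangian graph over all of $M$, which by the graph property forces $\mathcal{A}^{\ast}_{c}=\mathcal{A}^{\ast}_{[\omega]}=\mathcal{A}^{\ast}_{[\eta]}=\mathcal{G}_{\eta,u}$; since then $\mathcal{A}_{[\omega]}=M$, one gets $\mathcal{A}^{\ast}_{[\omega]}=\mathcal{G}_{\omega,v}$ for some $C^{1,1}$ function $v$, and comparing the two graph representations gives $\omega-\eta$ exact. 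You should repair your argument by inserting the passage to the relative interior of the flat and the backward-extension of calibration; with those two ingredients your outline essentially becomes the paper's proof.
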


\begin{proof}
Since $\mathcal{G}_{\eta ,u}$ is an invariant Lipschitz Lagrangian graph, it
is contained in the Mañé's set $\mathcal{N}_{\left[ \eta \right] }^{\ast }.$
Therefore, if $\left( x_{0},p\right) \in \mathcal{N}_{\left[ \omega \right]
}^{\ast }\cap \mathcal{G}_{\eta ,u}$, then we have $\left( x_{0},p\right)
\in \mathcal{N}_{\left[ \omega \right] }^{\ast }\cap \mathcal{N}_{\left[
\eta \right] }^{\ast }.$ By Lemma \ref{lem3}, the $\omega $-limit set of
points in Mañé set is contained in the Aubry set. Thus%
\begin{equation*}
\omega \left( x_{0},p\right) \subset \mathcal{A}_{\left[ \omega \right]
}^{\ast }\cap \mathcal{A}_{\left[ \eta \right] }^{\ast }.
\end{equation*}

This shows that the intersection $\mathcal{A}_{\left[ \omega \right] }^{\ast
}\cap \mathcal{A}_{\left[ \eta \right] }^{\ast }$ is non-empty. Then, by a
result of D. Massart (see \cite{mas1}, Proposition 6), $\alpha $ has a flat $%
F$ containing $\left[ \omega \right] $ and $\left[ \eta \right] $. Let $c$
be a cohomology class belonging to the relative interior of $F.$ It follows
from (\cite{mas1}, Proposition 6) that $\mathcal{A}_{c}^{\ast }\subset
\mathcal{A}_{\left[ \omega \right] }^{\ast }\cap \mathcal{A}_{\left[ \eta %
\right] }^{\ast }.$

Let us consider a closed 1-form $\eta _{c},$ with $\left[ \eta _{c}\right]
=c.\ $It follows from (\cite{fa1}, Theorem 4.9.3) that there exists a weak
KAM of positive type $u_{+}$ for the Lagrangian $L-\eta _{c}$. Then $u_{+}$
is subsolution of (HJ) and given $x\in M,$ we can find a $C^{1}$ curve $%
\gamma _{x}:\left[ 0,\infty \right) \rightarrow M$ with $\gamma _{x}\left(
0\right) =x,$ which is $\left( u_{+},L-\eta _{c},\alpha \left( c\right)
\right) $-calibrated. This means that for all $0<t^{\prime }<t$ holds%
\begin{equation*}
u_{+}\left( \gamma _{x}\left( t\right) \right) -u_{+}\left( \gamma
_{x}\left( t^{\prime }\right) \right) =\int_{t^{\prime }}^{t}L\left( \gamma
_{x}\left( s\right) ,\dot{\gamma}_{x}\left( s\right) \right) -\eta
_{c}\left( \dot{\gamma}_{x}\left( s\right) \right) +\alpha \left( c\right)
ds.
\end{equation*}%
Therefore $\left( \gamma _{x},\dot{\gamma}_{x}\right) \in \widetilde{%
\mathcal{N}}_{c}^{+}$ and, by Lemma \ref{lem3}, we have that the $\omega $%
-limit set of $\left( \gamma _{x},\dot{\gamma}_{x}\right) $ is contained in
Aubry set $\widetilde{\mathcal{A}}_{c}.$ Recall that by invariance of $%
\mathcal{G}_{\eta ,u}$ we have $\mathcal{G}_{\eta ,u}=\mathcal{L}\left(
\widetilde{\mathcal{I}}_{\eta }\left( u\right) \right) $ and the Aubry set $%
\mathcal{A}_{\left[ \eta \right] }^{\ast }$ is contained in $\mathcal{G}%
_{\eta ,u}.$ Since $\mathcal{A}_{c}^{\ast }\subset \mathcal{A}_{\left[ \eta %
\right] }^{\ast }\subset \mathcal{G}_{\eta ,u},$ we conclude the $\omega $%
-limit set of $\left( \gamma _{x},\dot{\gamma}_{x}\right) $ is contained in $%
\mathcal{G}_{\eta ,u}.$ Moreover, it follows from $\mathcal{G}_{\eta ,u}$
being an absorbing graph, that the Hamiltonian orbit $\mathcal{L}\left(
\gamma _{x},\dot{\gamma}_{x}\right) $ is entirely contained in $\mathcal{G}%
_{\eta ,u}.$ As a consequence, given $T>0,$ we have
\begin{equation*}
\mathcal{L}\left( \gamma _{x}\left( -T\right) ,\dot{\gamma}_{x}\left(
-T\right) \right) \in \mathcal{G}_{\eta ,u}.
\end{equation*}%
If we let $z=\gamma _{x}\left( -T\right) \in M,$ since $u_{+}$ is weak KAM,
there exists a $C^{1}$ curve $\gamma _{z}:\left[ 0,\infty \right)
\rightarrow M$ which is $\left( u_{+},L-\eta _{c},\alpha \left( c\right)
\right) $-calibrated with $\gamma _{z}\left( 0\right) =z.$ Similarly as
above, the $\omega $-limit set of $\mathcal{L}\left( \gamma _{z},\dot{\gamma}%
_{z}\right) $ is contained in $\mathcal{G}_{\eta ,u}.$ Thus $\mathcal{L}%
\left( \gamma _{z}\left( 0\right) ,\dot{\gamma}_{z}\left( 0\right) \right)
\in \mathcal{G}_{\eta ,u}$ and, since $\gamma _{z}\left( 0\right) =\gamma
_{x}\left( -T\right) ,$ we have $\dot{\gamma}_{z}\left( 0\right) =\dot{\gamma%
}_{x}\left( -T\right) .$ It follows from the uniqueness of solutions that $%
\gamma _{z}\left( t\right) =\gamma _{x}\left( t-T\right) .$ This shows that $%
\gamma _{x}|_{\left[ -T,\infty \right) }$ is $\left( u_{+},L-c,\alpha \left(
c\right) \right) $-calibrated and, by (\cite{fa1}, Lemma 4.13.1), $u_{+}$ is
differentiable at $x$ with%
\begin{equation*}
d_{x}u_{+}=\frac{\partial L}{\partial v}\left( x,\dot{\gamma}_{x}\left(
0\right) \right) -\eta _{c}\left( \dot{\gamma}_{x}\left( 0\right) \right) .
\end{equation*}

Now let us consider another weak KAM $v_{+}$ for the Lagrangian $L-\eta
_{c}. $ There exists a $C^{1}$ curve $\delta _{x}:\left[ 0,\infty \right)
\rightarrow M$ with $\delta _{x}\left( 0\right) =x,$ which is $\left(
v_{+},L-\eta _{c},\alpha \left( c\right) \right) $-calibrated. Similarly, we
conclude that%
\begin{equation*}
d_{x}v_{+}=\frac{\partial L}{\partial v}\left( x,\dot{\delta}_{x}\left(
0\right) \right) -\eta _{c}\left( \dot{\gamma}_{x}\left( 0\right) \right) .
\end{equation*}

Moreover, the points $\mathcal{L}\left( \gamma _{x}\left( 0\right) ,\dot{%
\gamma}_{x}\left( 0\right) \right) $ and $\mathcal{L}\left( \delta
_{x}\left( 0\right) ,\dot{\delta}_{x}\left( 0\right) \right) $ belong to the
graph $\mathcal{G}_{\eta ,u}$ with $\gamma _{x}\left( 0\right) =\delta
_{x}\left( 0\right) =x.$ Thus $\dot{\gamma}_{x}\left( 0\right) =\dot{\delta}%
_{x}\left( 0\right) \ $and we conclude that $d_{x}u_{+}=d_{x}v_{+}.$ Since
this equality holds for all $x\in M$ and $M$ is connected, we have that $%
u_{+}$ differ of $v_{+}$ by a constant. By arbitrariness of the two weak
KAM, we conclude that any two weak KAM for the Lagrangian $L-\eta _{c}$
differ by a constant. It follows from (\cite{fa2}, Proposition 4.4) that $%
\mathcal{A}_{c}^{\ast }=\mathcal{N}_{c}^{\ast }.$ On the other hand, since $%
u_{+}$ is differentiable everywhere in $M$, by (\cite{fa1}, Lemma 4.13.1),
we have
\begin{equation}
H\left( x,\eta _{c}\left( x\right) +d_{x}u_{+}\right) =\alpha \left(
c\right) ,  \label{fo2}
\end{equation}%
This means that the graph $\mathcal{G}_{\eta _{c},u_{+}}$ is invariant, so $%
\mathcal{G}_{\eta _{c},u_{+}}\subset \mathcal{N}_{c}^{\ast }=\mathcal{A}%
_{c}^{\ast }.$ By the graph property, we have $\mathcal{A}_{c}^{\ast }=%
\mathcal{N}_{c}^{\ast }=\mathcal{G}_{\eta _{c},u_{+}}.$ As a consequence of $%
\mathcal{A}_{c}^{\ast }\subset \mathcal{A}_{\left[ \omega \right] }^{\ast
}\cap \mathcal{A}_{\left[ \eta \right] }^{\ast },\ $we obtain
\begin{equation*}
\mathcal{A}_{c}^{\ast }=\mathcal{A}_{\left[ \omega \right] }^{\ast }=%
\mathcal{A}_{\left[ \eta \right] }^{\ast }=\mathcal{G}_{\eta ,u}.
\end{equation*}%
We also have that the projected Aubry set $\mathcal{A}_{\left[ \omega \right]
}$ is the whole manifold $M,$ so there exists a function $v\in C^{1,1}$ such
that $\mathcal{A}_{\left[ \omega \right] }^{\ast }=\mathcal{G}_{\omega ,v}.$
Therefore
\begin{equation*}
\eta \left( x\right) +d_{x}u=\omega \left( x\right) +d_{x}v,\forall x\in
M\Rightarrow \left( \eta -\omega \right) \left( x\right) =d_{x}\left(
v-u\right) ,
\end{equation*}%
which implies $\left[ \omega \right] =\left[ \eta \right] .\medskip $
\end{proof}

We can now prove the main result stated in Introduction.

\begin{proof}
(of Theorem \ref{teo1}) Suppose that $\mathcal{G}_{\eta ,u}$ is an absorbing
graph. By the Lemma \ref{lem4}, $\mathcal{A}_{\left[ \eta \right] }^{\ast }=%
\mathcal{G}_{\eta ,u}.$ We need to show that $\beta $ is differentiable on $%
\partial \alpha \left( \left[ \eta \right] \right) .$ For a given $h\in
\partial \alpha \left( \left[ \eta \right] \right) ,$ it suffices to show
that $\partial \beta \left( h\right) =\left\{ \left[ \eta \right] \right\} .$
Let us assume that $\left[ \xi \right] \in \partial \beta \left( h\right) ,$
so we have $h\in \partial \alpha \left( \left[ \eta \right] \right) \cap
\partial \alpha \left( \left[ \xi \right] \right) .$ It follows from Lemma %
\ref{lem1} that
\begin{equation*}
\widetilde{\mathcal{M}}^{h}\subset \widetilde{\mathcal{A}}_{\left[ \eta %
\right] }\cap \widetilde{\mathcal{A}}_{\left[ \xi \right] }.
\end{equation*}%
This implies that $\mathcal{A}_{\left[ \eta \right] }^{\ast }\cap \mathcal{A}%
_{\left[ \xi \right] }^{\ast }\neq \emptyset .$ Moreover, since $\mathcal{A}%
_{\left[ \eta \right] }^{\ast }\subset \mathcal{G}_{\eta ,u}$, by Lemma \ref%
{lem4} we have $\left[ \xi \right] =\left[ \eta \right] $. Hence $\beta $ is
differentiable at $h.$

Conversely, suppose that $\beta $ is differentiable at all homology class $%
h\in \partial \alpha \left( \left[ \eta \right] \right) $ and $\mathcal{A}_{%
\left[ \eta \right] }^{\ast }=\mathcal{G}_{\eta ,u}.$ Let $\left( x,p\right)
\in \mathcal{N}_{+}^{T}\left( L\right) \ $such that $\omega \left(
x,p\right) \subset \mathcal{G}_{\eta ,u}$ and let us consider $c\in
H^{1}\left( M;%
\mathbb{R}
\right) $ and $v\in SS_{c}$ such that $\left( x,p\right) \in \mathcal{L}%
\left( \widetilde{\mathcal{I}}_{c}^{+}\left( v\right) \right) .$ It follows
from Lemma \ref{lem3} that $\omega \left( x,p\right) \subset \mathcal{A}%
_{c}^{\ast }.$ Thus%
\begin{equation*}
\omega \left( x,p\right) \subset \mathcal{A}_{c}^{\ast }\cap \mathcal{G}%
_{\eta ,u}=\mathcal{A}_{c}^{\ast }\cap \mathcal{A}_{\left[ \eta \right]
}^{\ast }.
\end{equation*}%
This implies that $\mathcal{A}_{c}^{\ast }\cap \mathcal{A}_{\left[ \eta %
\right] }^{\ast }\neq \emptyset $ and, by (\cite{mas1}, Proposition 6),
there exists a flat $F$ of $\alpha $ such that $c$ and $\left[ \eta \right] $
belong to $F.$ As a consequence, there exists $h\in H_{1}\left( M;%
\mathbb{R}
\right) $ such that $\left[ \eta \right] ,c\in \partial \beta \left(
h\right) $. Since $\beta $ is differentiable at all $h\in \partial \alpha
\left( \left[ \eta \right] \right) ,$ we obtain $\left[ \eta \right] =c.$ In
particular, $\left( x,p\right) \in \mathcal{L}\left( \widetilde{\mathcal{I}}%
_{\left[ \eta \right] }^{+}\left( v\right) \right) .$ Since $\mathcal{A}_{%
\left[ \eta \right] }=M,$ there exists $\xi \in T_{x}M$ such that $\left(
x,\xi \right) \in \widetilde{\mathcal{A}}_{\left[ \eta \right] }$. Then $v$
is differentiable at $x$ and $\frac{\partial L}{\partial v}\left( x,\xi
\right) -\eta =d_{x}v$ (see \cite{fa2}, Theorem 2.5)$.$ Moreover, if $\gamma
=\pi \circ \varphi _{t}^{L}\left( \mathcal{L}^{-1}\left( x,p\right) \right) $
is the curve such that $\gamma |_{\left[ 0,+\infty \right) }$ is $\left( u,L-%
\left[ \eta \right] ,\alpha \left( \left[ \eta \right] \right) \right) $%
-calibrated, then%
\begin{equation*}
v\left( \gamma \left( t\right) \right) -v\left( \gamma \left( 0\right)
\right) =A_{L-\eta +\alpha \left( \left[ \eta \right] \right) }\left( \gamma
|_{\left[ 0,t\right] }\right) \text{ for all }t>0.
\end{equation*}%
Dividing by $t,$ and letting $t\rightarrow 0^{+}$, we get%
\begin{equation*}
d_{x}v\left( \dot{\gamma}\left( 0\right) \right) =L\left( x,\dot{\gamma}%
\left( 0\right) \right) -\eta \left( \dot{\gamma}\left( 0\right) \right)
+\alpha \left( \left[ \eta \right] \right) .
\end{equation*}%
By Fenchel inequality, this can happen if and only if $\frac{\partial L}{%
\partial v}\left( x,\dot{\gamma}\left( 0\right) \right) -\eta =$ $d_{x}v.$
Therefore $\dot{\gamma}\left( 0\right) =\xi $ and $\left( x,p\right) \in
\mathcal{A}_{\left[ \eta \right] }^{\ast }=\mathcal{G}_{\eta ,u}$.
\end{proof}

Other examples of absorbing graphs are graphs contained in neighborhoods
foliated by invariant Lipschitz Lagrangian graphs. We say that an open $%
\mathcal{V}$ in $T^{\ast }M$ is foliated by invariant Lipschitz Lagrangian
graphs if each $\left( x,p\right) \in \mathcal{V}$ belongs to a unique
invariant Lipschitz Lagrangian graph $\mathcal{G}\subset \mathcal{V}$.

\begin{definition}
We say that a Tonelli Hamiltonian $H$ is locally Lipschitz integrable on an
invariant Lipschitz Lagrangian graph $\mathcal{G}_{\eta ,u}$ if there exists
a neighborhood $\mathcal{V}\subset T^{\ast }M$ of $\mathcal{G}_{\eta ,u}$
foliated by invariant Lipschitz Lagrangian graphs.
\end{definition}

Now we can to prove the Theorem \ref{teo2}:\medskip

\begin{proof}
(of Theorem \ref{teo2}) Let $\mathcal{V}$ be a neighborhood of $\mathcal{G}%
_{\eta ,u}$ foliated by invariant Lipschitz Lagrangian graphs. Note that an
invariant Lipschitz Lagrangian graph $\mathcal{G}$ contained in $\mathcal{V}$
is absorbing. Since $\mathcal{G}_{\eta ,u}\subset \mathcal{V}$, by Theorem %
\ref{teo1},%
\begin{equation*}
\mathcal{G}_{\eta ,u}=\mathcal{A}_{\left[ \eta \right] }^{\ast }=\mathcal{N}%
_{\left[ \eta \right] }^{\ast }.
\end{equation*}%
We can use the upper semicontinuity of the Mañé set (see for instance \cite%
{arn1}, Proposition 13]) to deduce that there exists $U_{0}\subset
H^{1}\left( M;%
\mathbb{R}
\right) ,$ an open neighborhood of $\left[ \eta \right] ,$ such that$\
\mathcal{N}_{c}^{\ast }\subset \mathcal{V}$ for all $c\in U_{0}.$ Given%
\begin{equation*}
h\in V=\bigcup\nolimits_{c\in U_{0}}\partial \alpha \left( c\right) ,
\end{equation*}%
let us suppose that $h$ $\in \partial \alpha \left( c_{0}\right) $ for some $%
c_{0}\in U_{0}.$ So $\mathcal{N}_{c_{0}}^{\ast }\subset \mathcal{V}$ and, by
connectedness of Mañé set, $\mathcal{N}_{c_{0}}^{\ast }$ is contained in
some invariant Lipschitz Lagrangian $\mathcal{G}_{\omega ,v}\subset \mathcal{%
V}$. Since $\mathcal{G}_{\omega ,v}$ is absorbing, it follows from Lemma \ref%
{lem4} that $c_{0}=\left[ \omega \right] $. Moreover, by the Theorem \ref%
{teo1}, $\beta $ is differentiable at $h\in \partial \alpha \left(
c_{0}\right) .$
\end{proof}

\section{Dynamical properties on a neighborhood of graphs and
differerentiability of $\protect\beta $\label{sec1}}

The tiered Mañé set, denoted by $\mathcal{N}^{T}\left( L\right) $ was
introduced by M-C. Arnaud in \cite{arn1}. It is defined as the union of Mañé
sets $\widetilde{\mathcal{N}}_{c}$ for all cohomology class $c\in
H^{1}\left( M;%
\mathbb{R}
\right) $ , i.e.%
\begin{equation*}
\mathcal{N}^{T}\left( L\right) =\bigcup\limits_{c\in H^{1}\left( M\right) }%
\widetilde{\mathcal{N}}_{c}.
\end{equation*}%
Arnaud also defines the dual tiered Mañé $\mathcal{N}_{\ast }^{T}\left(
L\right) $ as $\mathcal{L}\left( \mathcal{N}^{T}\left( L\right) \right)
\subset T^{\ast }M$ and proves, in the work \textit{A particular
minimization property implies }$C^{0}$\textit{-integrability} (see \cite%
{arn2}), that the dual tiered Mañé $\mathcal{N}_{\ast }^{T}\left( L\right) $
is whole cotangent bundle $T^{\ast }M$ if and only if $T^{\ast }M$ is
foliated by invariant Lipschitz Lagrangian graph. In this section we are
interested in the differentiability of $\beta $ when there exists a
neighborhood $\mathcal{V}$ of an invariant Lipschitz Lagrangian $\mathcal{G}$
contained in $\mathcal{N}_{\ast }^{T}\left( L\right) .$ We study the
relation of this hypothesis with the existence of absorbing graphs contained
in the neighborhood $\mathcal{V}$.

\begin{lemma}
\label{lem2}Let us assume that for $\left[ \eta \right] \in H^{1}\left( M;%
\mathbb{R}
\right) $ there exists a neighborhood $\mathcal{V}$ of $\mathcal{A}_{\left[
\eta \right] }^{\ast }$ in $T^{\ast }M$ such that $\mathcal{V\subset N}%
_{\ast }^{T}\left( L\right) .$ Hence if for $c\in H^{1}\left( M;%
\mathbb{R}
\right) ,\mathcal{A}_{c}^{\ast }\cap \mathcal{V}$ is non-empty, then $%
\mathcal{A}_{\left[ \eta \right] }\subset \mathcal{A}_{c}.$
\end{lemma}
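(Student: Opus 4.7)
The plan is to mimic the approach of Lemma~\ref{lem4}: first extract an intersection of Aubry sets from Lemma~\ref{lem3} and the tiered hypothesis, then invoke Massart's flat theorem (Proposition~6 of~\cite{mas1}) to obtain a common flat $F$ of $\alpha$, and finally upgrade the intersection to the projection inclusion $\mathcal{A}_{[\eta]}\subset\mathcal{A}_{c}$ via Fathi's weak KAM theory.

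First I would fix $(y,q)\in\mathcal{A}_{c}^{*}\cap\mathcal{V}$. Since $\mathcal{A}_{c}^{*}$ is compact and invariant under the Hamiltonian flow, $\omega(y,q)$ is a non-empty compact subset of $\mathcal{A}_{c}^{*}$. The tiered hypothesis $\mathcal{V}\subset\mathcal{N}_{*}^{T}(L)$ provides a class $c'\in H^{1}(M;\mathbb{R})$ with $(y,q)\in\mathcal{N}_{c'}^{*}$, and Lemma~\ref{lem3} then places $\omega(y,q)\subset\mathcal{A}_{c'}^{*}$; hence $\mathcal{A}_{c}^{*}\cap\mathcal{A}_{c'}^{*}\neq\emptyset$, and by Massart's proposition $c$ and $c'$ belong to a common flat of $\alpha$.

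Next I would link $c'$ to $[\eta]$ inside a single flat, by exploiting the openness of $\mathcal{V}$ around $\mathcal{A}_{[\eta]}^{*}$ together with the upper semicontinuity of the Ma\~{n}\'{e} set in the cohomology parameter (Proposition~13 of~\cite{arn1}, already used in the proof of Theorem~\ref{teo2}): running the same $\omega$-limit argument starting from a point $(x_{0},p_{0})\in\mathcal{A}_{[\eta]}^{*}\subset\mathcal{V}$ should yield $\mathcal{A}_{[\eta]}^{*}\cap\mathcal{A}_{c'}^{*}\neq\emptyset$, and a second application of Massart puts $c'$ and $[\eta]$ in a common flat, which, combined with the first flat, forces $c$, $c'$ and $[\eta]$ to lie in a single flat $F$ of $\alpha$. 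For $c^{*}\in\mathrm{ri}(F)$ one then has $\mathcal{A}_{c^{*}}^{*}\subset\mathcal{A}_{c}^{*}\cap\mathcal{A}_{[\eta]}^{*}$. To upgrade this to the projection inclusion, given $x\in\mathcal{A}_{[\eta]}$, I would pick a positive-type weak KAM $u_{+}$ for $L-\eta_{c}$ (Theorem~4.9.3 of~\cite{fa1}) and a $(u_{+},L-\eta_{c},\alpha(c))$-calibrated curve $\gamma_{x}:[0,\infty)\to M$ with $\gamma_{x}(0)=x$; then $(\gamma_{x},\dot\gamma_{x})\in\widetilde{\mathcal{N}}_{c}^{+}$, so by Lemma~\ref{lem3} its $\omega$-limit lies in $\widetilde{\mathcal{A}}_{c}$, and iterating the backward-calibration trick of Lemma~\ref{lem4} while using $\mathcal{A}_{c^{*}}^{*}\subset\mathcal{A}_{c}^{*}\cap\mathcal{A}_{[\eta]}^{*}$ to propagate calibration across both cohomology classes, the whole Hamiltonian orbit through $\mathcal{L}(x,\dot\gamma_{x}(0))$ remains in $\mathcal{A}_{c}^{*}$, giving $x\in\mathcal{A}_{c}$.

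The hardest step will be the intermediate one: forcing the auxiliary class $c'$ (extracted from the tiered assumption at $(y,q)$) into the same flat of $\alpha$ as $[\eta]$. The tiered hypothesis furnishes only one cohomology class per point of $\mathcal{V}$, with no a priori relation between the classes at different points; the argument must leverage the fact that $\mathcal{V}$ is an open neighborhood of the whole of $\mathcal{A}_{[\eta]}^{*}$, together with the upper semicontinuity of the Ma\~{n}\'{e} set in the cohomology parameter, to confine all such auxiliary classes to a single flat containing $[\eta]$.
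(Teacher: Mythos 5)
Your proposal has two genuine gaps, and the second one is fatal. The step you yourself flag as hardest --- forcing the auxiliary class $c'$ supplied by the tiered hypothesis at a point of $\mathcal{A}_{c}^{*}\cap\mathcal{V}$ into the same flat of $\alpha$ as $[\eta]$ --- is never actually carried out: the tiered hypothesis assigns a cohomology class to each point of $\mathcal{V}$ with no coherence between points, and upper semicontinuity of the Ma\~{n}\'{e} set (which controls how $\mathcal{N}_{c}^{*}$ varies with $c$, not which classes occur at nearby points) gives no handle on this. More seriously, the final upgrade is invalid. Even granting a common flat $F$ with $\mathcal{A}_{c^{*}}^{*}\subset\mathcal{A}_{c}^{*}\cap\mathcal{A}_{[\eta]}^{*}$ for $c^{*}$ in its relative interior, this only places the \emph{smaller} Aubry set inside both, not $\mathcal{A}_{[\eta]}\subset\mathcal{A}_{c}$. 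Your bridge via the backward-calibration trick of Lemma \ref{lem4} cannot work here: that trick hinges on the absorbing-graph hypothesis ($\omega$-limit inside the graph forces the point itself into the graph), which Lemma \ref{lem2} does not assume. Without it, the observation that the forward calibrated curve issued from $x$ has $\omega$-limit in $\widetilde{\mathcal{A}}_{c}$ holds for \emph{every} $x\in M$ (this is exactly the content of the proof of Proposition \ref{pro3}), so your argument would prove $\mathcal{A}_{c}=M$ unconditionally, which is absurd.

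The paper's route is entirely different and avoids flats altogether. It invokes Proposition 10 of \cite{arn2}: for $T$ large, any Tonelli-minimizing closed curve of period $T$ based at a point of the projected Aubry set has Legendre-transformed initial velocity $\delta$-close to the corresponding Aubry point in $T^{*}M$, hence inside $\mathcal{V}\subset\mathcal{N}_{*}^{T}(L)$, hence a genuine periodic orbit supporting a $[\lambda_{n}]$-minimizing measure. Starting from the defining sequence of closed curves $\gamma_{n}$ at $x_{0}\in\mathcal{A}_{c}$ with $A_{L-\eta_{c}+\alpha(c)}(\gamma_{n})\to 0$, one constructs homologous closed minimizing curves $\Gamma_{n}$ based at an arbitrary $y\in\mathcal{A}_{[\eta]}$ (here is where the hypothesis that $\mathcal{V}$ is a neighborhood of all of $\mathcal{A}_{[\eta]}^{*}$ enters); both families are periodic orbits carrying minimizing measures with equal rotation vectors, so their $(L-\lambda_{n})$-actions coincide, and a cohomological bookkeeping transfers $A_{L-\eta_{c}+\alpha(c)}(\gamma_{n})\to 0$ to $A_{L-\eta_{c}+\alpha(c)}(\Gamma_{n})\to 0$, i.e.\ $y\in\mathcal{A}_{c}$ by the closed-curve characterization of the Aubry set. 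This action-comparison mechanism is the idea your proposal is missing.
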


\begin{proof}
Let $\eta _{c}$ be a representative of the cohomology class $c.$ Let us
consider $\left( x_{0},p_{0}\right) \in \mathcal{A}_{c}^{\ast }\cap \mathcal{%
V}$ and $\delta >0$ such that the ball $B_{\delta }\left( x_{0},p_{0}\right)
\subset T^{\ast }M$ centered at $\left( x_{0},p_{0}\right) $ and radius $%
\delta $ is contained in $\mathcal{V}$. By (\cite{arn2}, Proposition 10), we
can find $T_{0}\left( c\right) >0$ such that for all $T>T_{0}\left( c\right)
$ and for every Tonelli minimizing curve $\gamma :\left[ 0,T\right]
\rightarrow M$ with $\gamma \left( 0\right) =\gamma \left( T\right) $ for
the Lagrangian $L-\eta _{c},$ we have $d\left( \mathcal{L}\left( x_{0},\dot{%
\gamma}\left( 0\right) \right) ,\left( x_{0},p_{0}\right) \right) <\delta ,$
where $d$ is the distance in $T^{\ast }M.$

Since $x_{0}\in $ $\mathcal{A}_{c},$ one of characterizations of Aubry sets
(see for instance \cite{fa2}, Theorem 2.1 (4)) ensures the existence of a
sequence of Tonelli minimizing curves $\gamma _{n}:\left[ 0,T_{n}\right]
\rightarrow M$ for the Lagrangian $L-c$ with $\gamma _{n}\left( 0\right)
=\gamma _{n}\left( T_{n}\right) =x_{0}$ such that $T_{n}\rightarrow \infty $
and $A_{L-c+\alpha \left( c\right) }\left( \gamma _{n}\right) \rightarrow 0.$
Therefore, if $T_{n}>T_{0}\left( c\right) ,$ we have%
\begin{equation*}
d\left( \mathcal{L}\left( x_{0},\dot{\gamma}_{n}\left( 0\right) \right)
,\left( x_{0},p_{0}\right) \right) <\delta ,\text{ hence }\mathcal{L}\left(
x_{0},\dot{\gamma}_{n}\left( 0\right) \right) \in \mathcal{V\subset N}_{\ast
}^{T}\left( L\right) .
\end{equation*}%
In particular, $\left( \gamma _{n},\dot{\gamma}_{n}\right) $ is a periodic
orbit contained in some Mañé set $\widetilde{\mathcal{N}}_{\left[ \lambda
_{n}\right] }$ for some closed 1-form $\lambda _{n}.$ Therefore $\left(
\gamma _{n},\dot{\gamma}_{n}\right) $ supports a measure $\mu _{n}$ which is
$\left[ \lambda _{n}\right] $-minimizing.

Now we will show that $\mathcal{A}_{\left[ \eta \right] }\subset \mathcal{A}%
_{c}.$ Indeed, let us consider $y\in \mathcal{A}_{\left[ \eta \right] }$ and
$q\in T_{y}^{\ast }M$ such that $\left( y,q\right) \in \mathcal{A}_{\left[
\eta \right] }^{\ast }.$ Let $\epsilon >0$ such that $B_{\epsilon }\left(
y,q\right) \subset \mathcal{V}$. Again by (\cite{arn2}, Proposition 10), we
can find $T_{0}\left( \left[ \eta \right] \right) >0$ such that for all $%
T>T_{0}\left( \left[ \eta \right] \right) $ and for every Tonelli minimizing
curve $\zeta :\left[ 0,T\right] \rightarrow M$ with $\zeta \left( 0\right)
=\zeta \left( T\right) $ for the Lagrangian $L-\left[ \eta \right] ,$ we
have $d\left( \mathcal{L}\left( y,\dot{\zeta}\left( 0\right) \right) ,\left(
y,q\right) \right) <\epsilon .$

Take $n$ sufficiently large such that $T_{n}>\max \left\{ T_{0}\left(
c\right) ,T_{0}\left( \left[ \eta \right] \right) \right\} .$ Because of
Tonelli Theorem, we know that for each $T_{n},$ there exists a Tonelli
minimizing curve $\Gamma _{n}:\left[ 0,T_{n}\right] \rightarrow M$ with $%
\Gamma _{n}\left( 0\right) =\Gamma _{n}\left( T_{n}\right) =y$ for the
Lagrangian $L-\eta $ which is homologous to $\gamma _{n}.$ Since $%
T_{n}>T_{0}\left( \eta \right) ,$ we have $d\left( \mathcal{L}\left( y,\dot{%
\Gamma}_{n}\left( 0\right) \right) ,\left( y,q\right) \right) <\epsilon $
which implies
\begin{equation*}
\mathcal{L}\left( y,\dot{\Gamma}_{n}\left( 0\right) \right) \in B_{\epsilon
}\left( y,q\right) \subset \mathcal{V}.
\end{equation*}%
As a consequence, $\left( \Gamma _{n},\dot{\Gamma}_{n}\right) $ is a
periodic orbit which supports a measure $\nu _{n}$ which is $\overline{%
\lambda }_{n}$-minimizing for some closed 1-form $\overline{\lambda }_{n}.$
However, we have
\begin{equation*}
\rho \left( \mu _{n}\right) =\frac{1}{T_{n}}\left[ \gamma _{n}\right] =\frac{%
1}{T_{n}}\left[ \Gamma _{n}\right] =\rho \left( \nu _{n}\right) .
\end{equation*}%
Thus,%
\begin{eqnarray*}
A_{L-\lambda _{n}}\left( \mu _{n}\right) &=&A_{L}\left( \mu _{n}\right)
-\left\langle \rho \left( \mu _{n}\right) ,\lambda _{n}\right\rangle \\
&=&\beta \left( \rho \left( \mu _{n}\right) \right) -\left\langle \rho
\left( \nu _{n}\right) ,\lambda _{n}\right\rangle \\
&=&A_{L-\lambda _{n}}\left( \nu _{n}\right) .
\end{eqnarray*}%
Or,%
\begin{equation*}
\int\nolimits_{0}^{T_{n}}L\left( \gamma _{n},\dot{\gamma}_{n}\right)
-\lambda _{n}\left( \dot{\gamma}_{n}\right)
dt=\int\nolimits_{0}^{T_{n}}L\left( \Gamma _{n},\dot{\Gamma}_{n}\right)
-\lambda _{n}\left( \dot{\Gamma}_{n}\right) dt.
\end{equation*}%
Therefore,
\begin{eqnarray*}
A_{L-\eta _{c}+\alpha \left( c\right) }\left( \Gamma _{n}\right)
&=&A_{L-\lambda _{n}+\left( \lambda _{n}-\eta _{c}\right) +\alpha \left(
c\right) }\left( \Gamma _{n}\right) =A_{L-\lambda _{n}}\left( \Gamma
_{n}\right) +\left[ \lambda _{n}-\eta _{c}\right] \left[ \Gamma _{n}\right]
+\alpha \left( c\right) T_{n} \\
&=&A_{L-\lambda _{n}}\left( \gamma _{n}\right) +\left[ \lambda _{n}-\eta _{c}%
\right] \left[ \gamma _{n}\right] +\alpha \left( c\right) T_{n}=A_{L-\eta
_{c}+\alpha \left( c\right) }\left( \gamma _{n}\right) \rightarrow 0.
\end{eqnarray*}%
This shows that $y\in \mathcal{A}_{c}$ and we obtain $\mathcal{A}_{\left[
\eta \right] }\subset \mathcal{A}_{c}$.
\end{proof}

The following corollary relates a neighborhood contained in the dual tiered
Mañé to the existence of absorbing graphs for each cohomology class
belonging to an open subset of $H^{1}\left( M;%
\mathbb{R}
\right) .$

\begin{corollary}
\label{coro1}Let $\mathcal{V}$ be a neighborhood of an invariant Lipschitz
Lagrangian graph $\mathcal{G}_{\eta ,u}$ such that $\mathcal{G}_{\eta ,u}=%
\mathcal{A}_{\left[ \eta \right] }^{\ast }.$ Let us assume that $\mathcal{V}$
is contained in the dual tiered Mañé $\mathcal{N}_{\ast }^{T}\left( L\right)
.$ Then there exists a neighborhood $U_{0}\subset H^{1}\left( M;%
\mathbb{R}
\right) $ of $\left[ \eta \right] $ such that for each $c\in U_{0},$ there
exists an absorbing graph $\mathcal{G}_{\eta _{c},u_{c}}$ with $\left[ \eta
_{c}\right] =c$ and $u_{c}\in C^{1,1}.$ Hence $\beta $ is differentiable at
any point of $V=\bigcup\nolimits_{c\in U_{0}}\partial \alpha \left( c\right)
.$
\end{corollary}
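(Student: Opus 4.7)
The plan is to show that for each $c$ close to $[\eta]$ the Aubry set $\mathcal{A}_c^*$ is itself a full $C^{1,1}$ graph, then to derive the differentiability of $\beta$ on $\partial\alpha(c)$, and finally to invoke Theorem~\ref{teo1} to conclude that this graph is absorbing.

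First I would invoke the upper semicontinuity of $c\mapsto\widetilde{\mathcal{A}}_c$ (see \cite{arn1}, Proposition 13), together with the hypothesis $\mathcal{A}_{[\eta]}^* = \mathcal{G}_{\eta,u}\subset\mathcal{V}$, to produce an open neighborhood $U_0\subset H^1(M;\mathbb{R})$ of $[\eta]$ such that $\mathcal{A}_c^*\subset\mathcal{V}$ for every $c\in U_0$. For any such $c$ the Aubry set is nonempty, so $\mathcal{A}_c^*\cap\mathcal{V}\neq\emptyset$, and Lemma~\ref{lem2} gives $\mathcal{A}_{[\eta]}\subset\mathcal{A}_c$; since $\mathcal{A}_{[\eta]}=M$, this forces $\mathcal{A}_c=M$. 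Fathi's Theorem 2.5 in \cite{fa2} then provides a representative $\eta_c$ of $c$ and a unique $C^{1,1}$ solution $u_c$ of (HJ) for $L-\eta_c$ with $\mathcal{A}_c^* = \mathcal{G}_{\eta_c,u_c}$.

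Next, to prove differentiability of $\beta$ at a given $h\in\partial\alpha(c)$, I would take an arbitrary $[\xi]\in\partial\beta(h)$ and argue that $[\xi] = c$. By Lemma~\ref{lem1}, $\widetilde{\mathcal{M}}^h\subset\widetilde{\mathcal{A}}_c\cap\widetilde{\mathcal{A}}_{[\xi]}$, so $\mathcal{A}_{[\xi]}^*\cap\mathcal{V}\neq\emptyset$; a second application of Lemma~\ref{lem2} yields $\mathcal{A}_{[\xi]}=M$, making $\mathcal{A}_{[\xi]}^* = \mathcal{G}_{\eta_\xi,u_\xi}$ also a $C^{1,1}$ graph over $M$. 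By \cite{mas1}, Proposition 6, the classes $c$ and $[\xi]$ lie in a common flat $F$ of $\alpha$, and for any $c_r$ in the relative interior of $F$ one has $\mathcal{A}_{c_r}^*\subset\mathcal{A}_c^*\cap\mathcal{A}_{[\xi]}^*\subset\mathcal{V}$. A third appeal to Lemma~\ref{lem2} gives $\mathcal{A}_{c_r}=M$, so $\mathcal{A}_{c_r}^*$ is a $C^{1,1}$ graph over $M$ as well; the graph property of Aubry sets then forces the three graphs $\mathcal{A}_{c_r}^*\subset\mathcal{A}_c^*$ and $\mathcal{A}_{c_r}^*\subset\mathcal{A}_{[\xi]}^*$ to coincide, whence $\eta_c+du_c=\eta_\xi+du_\xi$ on $M$ and $c=[\eta_c]=[\eta_\xi]=[\xi]$. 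Thus $\partial\beta(h)=\{c\}$ for every $h\in V$, and Theorem~\ref{teo1} finally certifies that $\mathcal{G}_{\eta_c,u_c}$ is absorbing.

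The hard part is the middle stage. In the foliated setting of Theorem~\ref{teo2} the equality of cohomology classes came for free from the distinct leaves of $\mathcal{V}$, whereas here only the weaker hypothesis $\mathcal{V}\subset\mathcal{N}_*^T(L)$ is available, and one must apply Lemma~\ref{lem2} iteratively to promote the property $\pi(\mathcal{A}_{(-)}^*)=M$ along the entire flat $F$ before the graph property can be used to pin down the cohomology class uniquely.
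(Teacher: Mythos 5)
Your proof is correct, but it runs the deduction in the opposite order from the paper's. The paper, after producing the graphs $\mathcal{A}_{c}^{\ast }=\mathcal{G}_{\eta _{c},u_{c}}$ exactly as you do (upper semicontinuity of the Ma\~n\'e set plus Lemma \ref{lem2}), verifies the absorbing property \emph{directly from the definition}: it takes $\left( x,p\right) \in \mathcal{L}\left( \mathcal{N}_{+}^{T}\left( L\right) \right) $ with $\omega \left( x,p\right) \subset \mathcal{G}_{\eta _{c},u_{c}}$, observes that the orbit must enter $\mathcal{V}\subset \mathcal{N}_{\ast }^{T}\left( L\right) $ and hence lies in some Ma\~n\'e set $\mathcal{N}_{\overline{c}}^{\ast }$, and then runs the flat argument (Massart's Proposition 6, relative-interior class, Lemma \ref{lem2}, graph property) to force $\mathcal{N}_{\overline{c}}^{\ast }=\mathcal{G}_{\eta _{c},u_{c}}$ and conclude $\left( x,p\right) \in \mathcal{G}_{\eta _{c},u_{c}}$; differentiability of $\beta $ then falls out of the \emph{forward} implication of Theorem \ref{teo1}. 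You instead prove differentiability directly, by showing $\partial \beta \left( h\right) $ is a singleton via Lemma \ref{lem1} and the same flat argument, and then obtain the absorbing property from the \emph{converse} implication of Theorem \ref{teo1}. Since Theorem \ref{teo1} is an equivalence and its converse direction is proved independently, there is no circularity, and the central mechanism --- iterating Lemma \ref{lem2} along a flat to promote $\pi \left( \mathcal{A}_{\left( \cdot \right) }^{\ast }\right) =M$ and then using the graph property to identify cohomology classes --- is identical in both arguments. What your route buys is that it never needs to track an individual orbit of $\mathcal{N}_{+}^{T}\left( L\right) $ through $\mathcal{V}$; what the paper's route buys is a self-contained verification of the absorbing property that does not lean on the (longer) converse half of Theorem \ref{teo1}. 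One cosmetic remark: you invoke upper semicontinuity of $c\mapsto \widetilde{\mathcal{A}}_{c}$, whereas the cited Proposition 13 of \cite{arn1} concerns the Ma\~n\'e set; since $\mathcal{A}_{c}^{\ast }\subset \mathcal{N}_{c}^{\ast }$, the paper's formulation already gives you $\mathcal{A}_{c}^{\ast }\subset \mathcal{V}$, so nothing is lost.
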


\begin{proof}
We can use the upper semicontinuity of the Mañé set (see for instance \cite%
{arn1}, Proposition 13) to deduce that there exists $U_{0}\subset
H^{1}\left( M;%
\mathbb{R}
\right) ,$ a neighborhood of $\left[ \eta \right] ,$ such that $\ \mathcal{N}%
_{c}^{\ast }\subset \mathcal{V}$ for all $c\in U_{0}.$ In particular, for
all $c\in U_{0}$ we have $\mathcal{A}_{c}^{\ast }\subset \mathcal{V}$ and,
by the Lemma \ref{lem2}, we conclude that $\mathcal{A}_{c}=\mathcal{A}_{%
\left[ \eta \right] }.$ Moreover, since $\mathcal{G}_{\eta ,u}=\mathcal{A}_{%
\left[ \eta \right] }^{\ast },$ we have $\mathcal{A}_{c}=M$. Therefore,
there exists a closed 1-form $\eta _{c}$ with $\left[ \eta _{c}\right] =c$
and a function $u_{c}\in C^{1,1}$ such that $\mathcal{A}_{c}^{\ast }=%
\mathcal{G}_{\eta _{c},u_{c}}.$ It remains to show that $\mathcal{G}_{\eta
_{c},u_{c}}$ is absorbing. In fact, if $\left( x,p\right) \in \mathcal{L}%
\left( \mathcal{N}_{+}^{T}\left( L\right) \right) $ and
\begin{equation*}
\omega \left( x,p\right) \subset \mathcal{G}_{\eta _{c},u_{c}}=\mathcal{A}%
_{c}^{\ast },
\end{equation*}%
then the curve $\gamma \left( t\right) =\pi \circ \varphi _{t}^{H}\left(
x,p\right) ,$ the projection of the Hamiltonian flow $\varphi _{t}^{H},$
intersects $\mathcal{V}$ for some time $\tau >0.$ In particular $\varphi
_{\tau }^{H}\left( x,p\right) $ belongs to some Mañé set $\mathcal{N}_{%
\overline{c}}^{\ast }.$ This implies that $\omega \left( x,p\right) \subset
\mathcal{A}_{\overline{c}}^{\ast },$ so $\mathcal{A}_{\overline{c}}^{\ast
}\cap \mathcal{A}_{c}^{\ast }\neq \emptyset .$ The Lemma \ref{lem2} implies
that $\mathcal{A}_{\overline{c}}=M$ and, as a consequence, there exist $v\in
C^{1,1}$ and $\eta _{\overline{c}}$ a representative of the cohomology class
$\overline{c}$ such that $\mathcal{A}_{\overline{c}}^{\ast }=\mathcal{G}%
_{\eta _{\overline{c}},v}.$

By Proposition 6 of \cite{mas1}, $c$ and $\overline{c}$ belong to the same
flat $F$ of $\alpha $. Moreover, if $c_{0}$ is in the relative interior of $%
F,$ then $\mathcal{A}_{c_{0}}^{\ast }\subset \mathcal{A}_{c}^{\ast }\cap
\mathcal{A}_{\overline{c}}^{\ast }.$ It follows from Lemma \ref{lem2} that $%
\mathcal{A}_{c_{0}}=M\ $and, as a consequence, there exist a closed 1-form $%
\eta _{c_{0}}$ with $\left[ \eta _{c_{0}}\right] =c_{0}$ and a function $%
u_{0}\in C^{1,1}$ such that $\mathcal{A}_{c_{0}}^{\ast }=\mathcal{G}_{\eta
_{c_{0}},u_{0}}.$ This shows that
\begin{equation*}
\mathcal{G}_{\eta _{c_{0}},u_{0}}=\mathcal{G}_{\eta _{c},u}=\mathcal{G}%
_{\eta _{\overline{c}},v},
\end{equation*}%
and, by the graph property, $\mathcal{N}_{\overline{c}}^{\ast }=\mathcal{G}%
_{\eta _{\overline{c}},v}=\mathcal{G}_{\eta _{c},u}$. This shows that $%
\varphi _{t}^{H}\left( x,p\right) $ belongs to $\mathcal{G}_{\eta _{c},u}$
for all $t\in
\mathbb{R}
.$ In particular, $\left( x,p\right) $ belongs to $\mathcal{G}_{\eta _{c},u}$
and we conclude that $\mathcal{G}_{\eta _{c},u}$ is an absorbing graph.

The conclusion that $\beta $ is differentiable at any point of $%
V=\bigcup\nolimits_{c\in U_{0}}\partial \alpha \left( c\right) $ follows
from Theorem \ref{teo1}$.$
\end{proof}

The union of the invariant absorbing graphs obtained in the previous
corollary may or may not be an open subset of $T^{\ast }M.$ However, this is
the case, with the additional hypothesis $\dim H^{1}\left( M;%
\mathbb{R}
\right) =\dim M$, as shown in the following theorem.

\begin{theorem}
\label{teo3}Suppose that $\dim H^{1}\left( M;%
\mathbb{R}
\right) =\dim M.$ Let $U_{0}\subset H^{1}\left( M,%
\mathbb{R}
\right) $ be a neighborhood of $\left[ \eta \right] $ such that for each $%
c\in U_{0},$ there exists an invariant Lipschitz Lagrangian absorbing graph $%
\mathcal{G}_{\eta _{c},u_{c}}$ with $\left[ \eta _{c}\right] =c$ and $%
u_{c}\in C^{1,1}.$ Then the Hamiltonian is locally Lipschitz integrable on $%
\mathcal{G}_{\eta ,u},$ where $u=u_{\left[ \eta \right] }.$
\end{theorem}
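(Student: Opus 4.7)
The plan is to realize a neighborhood of $\mathcal{G}_{\eta,u}$ as the image of a continuous injective map from $U_{0}\times M$ into $T^{\ast}M$ and to invoke Brouwer's invariance of domain, exploiting the hypothesis $\dim H^{1}(M;\mathbb{R})=\dim M$ to match dimensions. Fix a basis $[\omega_{1}],\ldots,[\omega_{d}]$ of $H^{1}(M;\mathbb{R})$ with $d=\dim M$, and choose closed 1-form representatives $\omega_{i}$. Shrinking $U_{0}$ if necessary, every $c\in U_{0}$ writes uniquely as $c=[\eta]+\sum_{i=1}^{d}t_{i}(c)[\omega_{i}]$, and I set $\eta_{c}:=\eta+\sum t_{i}(c)\omega_{i}$, so that $c\mapsto\eta_{c}$ is smooth. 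After normalizing by $u_{c}(x_{0})=0$ at a fixed basepoint, define
\[
\Phi:U_{0}\times M\longrightarrow T^{\ast}M,\qquad \Phi(c,x):=(x,\,\eta_{c}(x)+d_{x}u_{c}).
\]

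The first step is injectivity of $\Phi$. For distinct $c,c'\in U_{0}$, Theorem \ref{teo1} applied to the absorbing graph $\mathcal{G}_{\eta_{c},u_{c}}$ gives $\mathcal{G}_{\eta_{c},u_{c}}=\mathcal{A}_{c}^{\ast}$, while $\mathcal{G}_{\eta_{c'},u_{c'}}\subset\mathcal{N}_{c'}^{\ast}$. If the two graphs shared a point, Lemma \ref{lem4} applied to the absorbing graph $\mathcal{G}_{\eta_{c},u_{c}}$ would force $c=c'$, a contradiction. Hence the family $\{\mathcal{G}_{\eta_{c},u_{c}}\}_{c\in U_{0}}$ is pairwise disjoint and $\Phi$ is injective.

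Next I would establish continuity of $\Phi$; since $\eta_{c}$ varies smoothly in $c$, this reduces to the $C^{0}$-continuity of $c\mapsto du_{c}$. This is the principal obstacle of the argument. The approach is to combine the upper semicontinuity of the Aubry/Ma\~n\'e set in the cohomology class with Mather's graph theorem: each $\mathcal{A}_{c}^{\ast}=\mathcal{G}_{\eta_{c},u_{c}}$ is a graph over all of $M$, and on any compact subset of $U_{0}$ the Lipschitz constants of these graphs are uniformly bounded. Thus any Hausdorff limit of a convergent sequence $\mathcal{G}_{\eta_{c_{n}},u_{c_{n}}}$ with $c_{n}\to c_{0}$ lies in $\mathcal{A}_{c_{0}}^{\ast}=\mathcal{G}_{\eta_{c_{0}},u_{c_{0}}}$ and is itself a Lipschitz graph over $M$ with the same bound, hence coincides with $\mathcal{G}_{\eta_{c_{0}},u_{c_{0}}}$. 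This upgrades Hausdorff upper semicontinuity to uniform convergence of the differentials $du_{c_{n}}\to du_{c_{0}}$.

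Finally, since $\dim(U_{0}\times M)=d+\dim M=2\dim M=\dim T^{\ast}M$, Brouwer's invariance of domain implies that the continuous injection $\Phi$ is an open embedding. Therefore $\mathcal{V}_{0}:=\Phi(U_{0}\times M)$ is an open neighborhood of $\mathcal{G}_{\eta,u}=\Phi(\{[\eta]\}\times M)$, foliated by the disjoint invariant Lipschitz Lagrangian graphs $\{\mathcal{G}_{\eta_{c},u_{c}}\}_{c\in U_{0}}$. This exhibits $H$ as locally Lipschitz integrable on $\mathcal{G}_{\eta,u}$, completing the proof.
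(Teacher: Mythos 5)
Your proposal is correct and follows essentially the same route as the paper: the same map $(c,x)\mapsto(x,\eta_{c}(x)+d_{x}u_{c})$, injectivity from disjointness of the absorbing graphs, continuity via upper semicontinuity of the Ma\~n\'e set combined with the identity $\mathcal{A}_{c}^{\ast}=\mathcal{N}_{c}^{\ast}=\mathcal{G}_{\eta_{c},u_{c}}$ and the graph property, and finally invariance of domain using $\dim H^{1}(M;\mathbb{R})=\dim M$. Your injectivity step is in fact spelled out more carefully than the paper's (which only says the absorbing graphs are disjoint), by explicitly invoking Lemma \ref{lem4}.
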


\begin{proof}
Let us define the map%
\begin{eqnarray}
&&F:M\times U_{0}\rightarrow T^{\ast }{}M  \label{f3} \\
&&\left( x,c\right) \rightarrow \left( x,\eta _{c}\left( x\right)
+d_{x}u_{c}\right)  \notag
\end{eqnarray}%
where the closed 1-form $\eta _{c}$ is a representative of $c$ and $u_{c}\in
C^{1,1}$ such that $\mathcal{G}_{\eta _{c},u_{c}}$ is an invariant absorbing
graph. In particular, by Theorem \ref{teo1}, we have $\mathcal{A}_{c}^{\ast
}=\mathcal{G}_{\eta _{c},u_{c}}.$ This map is injective because the
absorbing graphs are disjoint. Moreover, $F$ is also continuous. In fact,
let $\left( x_{n},c_{n}\right) \rightarrow \left( x_{0},c\right) $ and
consider its associated graphs $\mathcal{A}_{c_{n}}^{\ast }=\mathcal{G}%
_{\eta _{c_{n}},u_{c_{n}}}^{\ast }.$ Let us consider the sequence of
Lipschtz functions $\lambda _{n}=\eta _{c_{n}}+du_{c_{n}}.$ Note that, by
the graph property, we have $\mathcal{A}_{c_{n}}^{\ast }=\mathcal{N}%
_{c_{n}}^{\ast }.$ Moreover, by upper semicontinuity of the Mañé set (see
\cite{arn1}, Proposition 13), we conclude that for $n$ sufficiently large,
the sequence $\left( x_{n},\lambda _{n}\left( x_{n}\right) \right) \in
\mathcal{N}_{c_{n}}^{\ast }$ remains in a compact set. We can conclude that
-- up to selecting a subsequence -- $\left( x_{n},\lambda _{n}\left(
x_{n}\right) \right) $ converges to some point that belongs to $\mathcal{N}%
_{c}^{\ast }.$ Moreover, since $\mathcal{N}_{c_{n}}^{\ast }=\mathcal{A}%
_{c_{n}}^{\ast }$ and $x_{n}\rightarrow x_{0},$ by the graph property, we
conclude that%
\begin{equation*}
\left( x_{n},\lambda _{n}\left( x_{n}\right) \right) \rightarrow \left(
x_{0},\eta _{c}\left( x_{0}\right) +d_{x_{0}}u_{c}\right) \Leftrightarrow
F\left( x_{n},c_{n}\right) \rightarrow F\left( x_{0},c\right) .
\end{equation*}%
The result follows from Invariance Domain Theorem for manifolds. In fact,
\begin{equation*}
\dim \left( M\times U\right) =2n=\dim T^{\ast }M
\end{equation*}%
and $F$ is continuous and injective. Therefore $F\left( M\times U\right) $
is open and $\mathcal{G}_{\eta ,u}\subset F\left( M\times U\right) ,$ that
is, the Hamiltonian is locally Lipschitz integrable on $\mathcal{G}_{\eta
,u}.$
\end{proof}

As an immediate consequence of Corollary \ref{coro1} and Theorem \ref{teo3},
we present a local version of Arnaud's Theorem (\cite{arn2}, Theorem 1).

\begin{corollary}
\label{coro2}Suppose that $\dim H^{1}\left( M;%
\mathbb{R}
\right) =\dim M.$ Let $\mathcal{V}$ be a neighborhood of an invariant
Lipschitz Lagrangian $\mathcal{G}_{\eta ,u}$ such that $\mathcal{G}_{\eta
,u}=\mathcal{A}_{\left[ \eta \right] }^{\ast }$ and $\mathcal{V}$ is
contained in the dual tiered Mañé $\mathcal{N}_{\ast }^{T}\left( L\right) .$
Then the Hamiltonian is locally Lipschitz integrable on $\mathcal{G}_{\eta
,u}.$
\end{corollary}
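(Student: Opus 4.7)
The statement is designed to be essentially a concatenation of the two preceding results, so my plan is simply to verify that the hypotheses of Corollary~\ref{coro1} are met, feed its conclusion into Theorem~\ref{teo3}, and check that no identification issue arises at the base cohomology class $[\eta]$.

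First I would observe that the hypotheses of Corollary~\ref{coro1} hold verbatim: $\mathcal{G}_{\eta,u}$ is an invariant Lipschitz Lagrangian graph with $\mathcal{G}_{\eta,u}=\mathcal{A}_{[\eta]}^{\ast}$, and $\mathcal{V}\subset \mathcal{N}_{\ast}^{T}(L)$ is a neighborhood of it. Applying that corollary, I obtain an open neighborhood $U_{0}\subset H^{1}(M;\mathbb{R})$ of $[\eta]$ and, for each $c\in U_{0}$, an invariant Lipschitz Lagrangian absorbing graph $\mathcal{G}_{\eta_{c},u_{c}}$ with $[\eta_{c}]=c$ and $u_{c}\in C^{1,1}$.

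Next I would feed this family into Theorem~\ref{teo3}. The standing assumption $\dim H^{1}(M;\mathbb{R})=\dim M$ is the exact hypothesis needed there, and I now have the required structure (for every $c\in U_{0}$, an invariant Lipschitz Lagrangian absorbing graph in the class $c$, regularity $C^{1,1}$). Theorem~\ref{teo3} then concludes that the Hamiltonian is locally Lipschitz integrable on $\mathcal{G}_{\eta,u_{[\eta]}}$.

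The only point that requires a sentence of justification, rather than invocation, is that the graph produced by Corollary~\ref{coro1} at $c=[\eta]$ coincides with the given $\mathcal{G}_{\eta,u}$; this is immediate from Theorem~\ref{teo1} applied to the absorbing graph $\mathcal{G}_{\eta_{[\eta]},u_{[\eta]}}$, which forces $\mathcal{A}_{[\eta]}^{\ast}=\mathcal{G}_{\eta_{[\eta]},u_{[\eta]}}$, combined with the hypothesis $\mathcal{A}_{[\eta]}^{\ast}=\mathcal{G}_{\eta,u}$ and the graph property. Thus $u=u_{[\eta]}$ up to a constant, and the conclusion of Theorem~\ref{teo3} applies directly to $\mathcal{G}_{\eta,u}$. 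There is no genuine obstacle in this corollary; the content is packaged entirely in Corollary~\ref{coro1} and Theorem~\ref{teo3}, and the only care needed is the identification at the base class just described.
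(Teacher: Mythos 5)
Your proposal is correct and follows exactly the route the paper intends: the paper states Corollary~\ref{coro2} as ``an immediate consequence of Corollary~\ref{coro1} and Theorem~\ref{teo3}'' with no further argument, which is precisely your chain of applications. Your extra remark identifying the graph produced at $c=[\eta]$ with the given $\mathcal{G}_{\eta,u}$ via $\mathcal{A}_{[\eta]}^{\ast}$ and the graph property is a harmless (and slightly more careful) addition to what the paper leaves implicit.
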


\section{The Case $M=\mathbb{T}^{2}$}

We say that a homology $h\in H_{1}\left( \mathbb{T}^{2};%
\mathbb{R}
\right) $ is \textit{rational }if there exists $\lambda >0$ such that $%
\lambda h\in i_{\ast }H_{1}\left( \mathbb{T}^{2};%
\mathbb{Z}
\right) ,$ where $i_{\ast }:H_{1}\left( \mathbb{T}^{2};%
\mathbb{Z}
\right) \rightarrow H_{1}\left( \mathbb{T}^{2};%
\mathbb{R}
\right) $ is the natural map$.$ Since the manifold treated in this section
is the torus $\mathbb{T}^{2},$ we can say that a homology $h\in H_{1}\left(
\mathbb{T}^{2};%
\mathbb{R}
\right) $ is \textit{irrational }if it is not rational. For general
manifolds, there is the concept of $k$\textit{-irrationality} (see for
instance \cite{mas1}).

Note that if two cohomology classes lie in the relative interior of a flat
of $\alpha ,$ by \cite{mat1} their Mather sets coincide. Then $\widetilde{%
\mathcal{M}}\left( \partial \beta (h)\right) $ denotes the Mather sets of
all the cohomologies in the relative interior of $\partial \beta (h).$ A
homology class $h$ is said to be singular if its Legendre transform $%
\partial \beta (h)$ is a singular flat, i.e. its Mather set $\widetilde{%
\mathcal{M}}\left( \partial \beta (h)\right) $ contains fixed points$.$

One natural question in the present context is if the set $%
V=\bigcup\nolimits_{c\in U_{0}}\partial \alpha \left( c\right) $ obtained in
Theorem $\ref{teo2}$ is an open of $H_{1}\left( M;%
\mathbb{R}
\right) .$ If this the case, by convexity $\beta ,$ we obtain that it is of
class $C^{1}$ in $V.$ In the case of $M=\mathbb{T}^{2},$ a positive answer
to this question is given in the next proposition.

\begin{proposition}
\label{prop2}Suppose $M=\mathbb{T}^{2}.$ Let $c_{0}\in H^{1}\left( \mathbb{T}%
^{2};%
\mathbb{R}
\right) $ and $U_{0}$ be a neighborhood of $c_{0}$ in $H^{1}\left( \mathbb{T}%
^{2};%
\mathbb{R}
\right) $ such that $\beta $ is differentiable at any point of $%
V=\bigcup\nolimits_{c\in U_{0}}\partial \alpha \left( c\right) .$ Then $%
\alpha $ is $C^{1}$ in $U_{0}.$ In particular, $V$ is an open set of $%
H_{1}\left( \mathbb{T}^{2};%
\mathbb{R}
\right) $ and $\beta $ is of class $C^{1}$ in $V.$
\end{proposition}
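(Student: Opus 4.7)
The plan is to prove the three conclusions in order: first that $\alpha$ is $C^1$ on $U_0$, and then to deduce openness of $V$ and $C^1$-regularity of $\beta$ on $V$ from convex-analytic duality combined with $\dim H_1(\mathbb{T}^2;\mathbb{R}) = 2$.

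I would begin by translating the hypothesis into strict convexity of $\alpha$ on $U_0$. For any $c \in U_0$ and any $h \in \partial\alpha(c) \subset V$, Fenchel duality gives $c \in \partial\beta(h)$, and the differentiability of $\beta$ at $h$ forces $\partial\beta(h) = \{c\}$. This rules out any affine piece of $\alpha$ through $c$ with slope $h$; varying $h$ over all of $\partial\alpha(c)$, the flat of $\alpha$ containing $c$ must reduce to $\{c\}$, so $\alpha$ is strictly convex at every point of $U_0$.

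The central step is to upgrade strict convexity to $C^1$-regularity, and this is where the hypothesis $M = \mathbb{T}^2$ is indispensable; in general convex analysis a strictly convex function can still have isolated corners (e.g.\ $\alpha(c_1,c_2) = c_1^2 + c_2^2 + |c_1 - c_2|$), so no purely convex argument can suffice. Arguing by contradiction, I would suppose $\alpha$ fails to be differentiable at some $c_0 \in U_0$; then $\partial\alpha(c_0)$ is a non-trivial convex subset of $H_1(\mathbb{T}^2;\mathbb{R})$ and by Lemma \ref{lem1} the Mather set $\widetilde{\mathcal{M}}_{c_0}$ contains minimizing measures with distinct rotation vectors $h_1 \neq h_2$ in $\partial\alpha(c_0)$. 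I would then invoke the Mather--Massart structure theory for Aubry--Mather sets on the two-torus (see \cite{mas1}, \cite{mas2}): non-trivial flats of $\beta$ on $\mathbb{T}^2$ have strong rigidity, and the coexistence of multiple rotation vectors in $\widetilde{\mathcal{M}}_{c_0}$ forces $c_0$ to sit in a non-trivial flat of $\alpha$. This contradicts the strict convexity at $c_0$ derived in the first step, so $\partial\alpha(c)$ must be a singleton for every $c \in U_0$, and hence $\alpha$ is $C^1$ on $U_0$.

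For the remaining conclusions, the gradient map $d\alpha\colon U_0 \to H_1(\mathbb{T}^2;\mathbb{R})$ is continuous with image $V$, and is injective on $U_0$ by the strict convexity derived above. Since $\dim U_0 = \dim H_1(\mathbb{T}^2;\mathbb{R}) = 2$, the invariance of domain theorem gives that $V = d\alpha(U_0)$ is open in $H_1(\mathbb{T}^2;\mathbb{R})$. Then $\beta$, being convex and differentiable at every point of the open set $V$, is automatically $C^1$ on $V$. The principal obstacle is the rigidity step: ruling out the coexistence of a kink of $\alpha$ at $c_0$ with strict convexity of $\alpha$ at $c_0$ is a genuinely two-dimensional Aubry--Mather phenomenon, which is why the statement is restricted to $\mathbb{T}^2$.
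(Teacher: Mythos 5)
Your opening and closing steps are sound and agree with what the paper does: Fenchel duality plus differentiability of $\beta$ on $V$ does give that no non-trivial flat of $\alpha$ meets $U_{0}$, and once $\alpha$ is known to be $C^{1}$ and its gradient injective on $U_{0}$, invariance of domain yields openness of $V$ and hence $C^{1}$-regularity of the convex function $\beta$ there (this is exactly how the paper uses the proposition later, where it notes the Legendre transform is a homeomorphism between $V$ and $U_{0}$). You also correctly diagnose that the whole difficulty is passing from strict convexity to differentiability of $\alpha$.

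But that central step is a genuine gap in your proposal. The ``rigidity'' assertion you invoke --- that the coexistence of two rotation vectors in $\widetilde{\mathcal{M}}_{c_{0}}$ forces $c_{0}$ into a non-trivial flat of $\alpha$ --- is neither proved nor, as far as I can tell, a citable theorem of Massart in that form; Massart's results tie flats of $\alpha$ to the topology of the Aubry set, and an invariant Lagrangian graph over $\mathbb{T}^{2}$ can perfectly well carry minimizing measures with two distinct (parallel) rotation vectors without any a priori flat of $\alpha$. The paper's actual argument is quite different and uses the hypothesis in a way your proposal never touches. Assuming $\partial\alpha(c)=[\lambda h,h]$ is a non-trivial radial flat with $h$ a rational extremal point (radiality of flats of $\beta$ on $\mathbb{T}^{2}$ and rationality of the endpoints are needed here, via \cite{car2}), one first shows that for $t_{n}>1$, $t_{n}\to 1$, the subderivatives $c_{n}\in\partial\beta(t_{n}h)$ stay bounded by superlinearity and converge to $c$, so $c_{n}\in U_{0}$ and hence $\beta$ is differentiable at the points $t_{n}h$ \emph{just outside} the flat --- a nontrivial approximation step, since the hypothesis only gives differentiability on $V=\bigcup_{c\in U_{0}}\partial\alpha(c)$. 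One then checks $t_{n}h$ is non-singular, applies Corollary 1 of \cite{sor1} to get $\mathbb{T}^{2}$ foliated by periodic orbits with $\mathcal{A}_{c_{n}}=\mathbb{T}^{2}$, and derives a contradiction by comparing the periods $T$ and $T_{n}$ of the orbits realizing the integral class $h_{0}$ (forcing $t_{n_{0}}\le 1$); irrational $h$ is handled separately by unique ergodicity. None of this dynamical input appears in your proposal, so as written it reduces the proposition to an unproved claim that is essentially equivalent to the proposition itself.
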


\begin{proof}
Let $c\in U_{0}$ be a cohomology class and let $h$ be a rational homology
class belonging to $\partial \alpha \left( c\right) .$ Suppose by
contradiction that $\partial \alpha \left( c\right) \neq \left\{ h\right\} .$
Recall that, in case $M=\mathbb{T}^{2},$ all flats of $\beta $ flats are
radial (see \cite{car2}) . Therefore, exchanging, if necessary, $h$ and a
nonzero extremal point of $\partial \alpha \left( c\right) ,$ which exists
because $\partial \alpha \left( c\right) $ has two extremal points which
also are rational, we can assume
\begin{equation*}
\partial \alpha \left( c\right) =\left[ \lambda h,h\right] =\left\{ th:t\in %
\left[ \lambda ,1\right] \right\}
\end{equation*}%
for some $\lambda <1.$ Let us consider a sequence $t_{n}>1$ such that $%
t_{n}\rightarrow 1$. We will show that, for $n$ sufficiently large, $\beta $
is differentiable at $t_{n}h.$ In fact, there exists a sequence $c_{n}\in
H^{1}\left( \mathbb{T}^{2};%
\mathbb{R}
\right) $ such that $c_{n}\in \partial \beta \left( t_{n}h\right) .$Thus
\begin{equation*}
\beta \left( t_{n}h\right) =\left\langle t_{n}h,c_{n}\right\rangle -\alpha
\left( c_{n}\right) .
\end{equation*}%
The sequence $c_{n}$ has subsequence bounded. Otherwise, let us consider a
subsequence of $\frac{c_{n}}{\left\Vert c_{n}\right\Vert }$ convergent, i.e.
$\frac{c_{n_{k}}}{\left\Vert c_{n_{k}}\right\Vert }\rightarrow a$, where $%
\left\Vert .\right\Vert $ denotes a norm on $H^{1}\left( \mathbb{T}^{2};%
\mathbb{R}
\right) $ (see for instance \cite{fa1}, Section 4.10). So we have $%
\left\Vert c_{n_{k}}\right\Vert \rightarrow \infty $ and
\begin{equation*}
\frac{\alpha \left( c_{n_{k}}\right) }{\left\Vert c_{n_{k}}\right\Vert }%
=\left\langle t_{n_{k}}h,\frac{c_{n_{k}}}{\left\Vert c_{n_{k}}\right\Vert }%
\right\rangle -\frac{\beta \left( t_{n_{k}}h\right) }{\left\Vert
c_{n_{k}}\right\Vert }\rightarrow \left\langle h,a\right\rangle ,
\end{equation*}%
which contradicts the superlinearity of $\alpha .$ Then we can assume,
extracting a subsequence if necessary, $c_{n}\rightarrow \widetilde{c}.$
Thence%
\begin{equation*}
\beta \left( h\right) =\lim \beta \left( t_{n}h\right) =\lim \left\langle
t_{n}h,c_{n}\right\rangle -\alpha \left( c_{n}\right) =\left\langle h,%
\widetilde{c}\right\rangle -\alpha \left( \widetilde{c}\right) .
\end{equation*}%
This shows that $\widetilde{c}\in \partial \beta \left( h\right) .$ Since $%
\beta $ is differentiable at $h,$ we conclude that $\widetilde{c}=c$ and $%
c_{n}\in U_{0}$ for $n$ sufficiently large. Therefore $\beta $ is
differentiable at $t_{n}h.$

Observe that $t_{n}h$ is non-singular for all $n\in
\mathbb{N}
$. Otherwise, there exists a fixed point, which comprise the support of a
minimizing measure $\mu _{0},$ in $\widetilde{\mathcal{M}}\left( \partial
\beta \left( t_{n}h\right) \right) =\widetilde{\mathcal{M}}_{c_{n}}.$ Since $%
\rho \left( \mu _{0}\right) =0,$ the set $\left[ 0,t_{n}h\right] $ is
contained in a flat of $\beta $ and the maximal flat $\left[ \lambda h,h%
\right] $ may be extended because $t_{n}>1.$

Therefore we can apply (\cite{sor1}, Corollary 1) to deduce that $\mathbb{T}%
^{2}$ is foliated by periodic orbits and $\mathcal{M}_{c_{n}}=\mathcal{A}%
_{c_{n}}=\mathbb{T}^{2}.$ By (\cite{gon3}, Proposition 2.1) we can consider
a point $\left( x,v\right) \in T\mathbb{T}^{2}$ such that the orbit $\varphi
_{t}\left( x,v\right) $ is periodic with period $T$ and which comprise the
support of a minimizing measure with rotation vector $\lambda h.$ If $\left(
x,v\right) $ is a fixed point, take $T=+\infty .$ Since $x\in \mathcal{A}%
_{c_{n}},$ there exists a only $v_{n}$ such that $\left( x,v_{n}\right) \in
\widetilde{\mathcal{A}}_{c_{n}}$ is a periodic point with period $T_{n}.$ By
semicontinuity of the Aubry set, $\left( x,v_{n}\right) $ converges to some
point $\left( x,w\right) $ of $\widetilde{\mathcal{A}}_{c}.$ By the graph
property, $w=v.$

We now prove that for all $\delta >0,$ there exists $n_{0}\in
\mathbb{N}
$ such that $T_{n_{0}}\geq T-\delta $. In fact, if some $\delta >$ $0$ is
such that $0<T_{n}<T-\delta $ for all $n\in
\mathbb{N}
,$ extracting a subsequence if necessary, $T_{n}\rightarrow S<T$ and $%
\varphi _{T_{n}}\left( x,v_{n}\right) \rightarrow \varphi _{S}\left(
x,v\right) =\left( x,v\right) $ which contradicts the minimality of period
of the orbit of $\left( x,v\right) $. Let us consider $h_{0}\in H_{1}\left(
\mathbb{T}^{2};%
\mathbb{Z}
\right) $ such that the probability measure carried by the orbit $\varphi
_{t}\left( x,v\right) $ have homology $\frac{1}{T}h_{0}=\lambda h$ and the
probability measure carried by the orbit $\varphi _{t}\left( x,v_{n}\right) $
have homology $\frac{1}{T_{n}}h_{0}=t_{n}h.$ Given $\delta =T\left(
1-\lambda \right) ,$ there exists $n_{0}\in
\mathbb{N}
$ such that $T_{n_{0}}\geq T-T\left( 1-\lambda \right) =T\lambda .$ Then,
since $h_{0}=T\lambda h$ and $h_{0}=T_{n_{0}}t_{n_{0}}h,$ we have $\frac{%
T\lambda }{T_{n_{0}}}=t_{n_{0}}\leq 1$ which is a contradiction.

Now suppose that $h$ be an irrational homology class belongs to $\partial
\alpha \left( c\right) .$ Therefore any probability measure $h$-minimizing
is supported on a lamination of the torus without closed leaves. Moreover,
this measure is uniquely ergodic. In particular, $h$ is not contained in any
non-trivial flat of $\beta .$
\end{proof}

As a consequence of the above Proposition, we present a local version of (%
\cite{mas1}, Theorem 3).

\begin{theorem}
\label{teo4}Suppose that $M=\mathbb{T}^{2}$ and let $\left[ \eta \right] $
be a cohomology class. Then the following statements are equivalent:\newline
(1) There exists $u\in C^{1,1}\left( M\right) $ and a neighborhood $\mathcal{%
V}$ of $\mathcal{G}_{\eta ,u}$ in $T^{\ast }M$ such that $\mathcal{A}_{\left[
\eta \right] }^{\ast }=\mathcal{G}_{\eta ,u}$ and $\mathcal{V}\subset
\widetilde{\mathcal{N}}^{T}\left( L\right) .$ \newline
(2) $\beta $ is differentiable at any point of $V=\bigcup\limits_{c\in
U_{0}}\partial \alpha \left( c\right) $ for some neighborhood $U_{0}\subset
H^{1}\left( M;%
\mathbb{R}
\right) $ of $\left[ \eta \right] .$\newline
(3) There exists $u\in C^{1,1}\left( M\right) $ such that the Hamiltonian is
locally Lipschitz integrable on $\mathcal{G}_{\eta ,u}.$
\end{theorem}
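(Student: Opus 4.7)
The plan is to run the cycle $(1) \Rightarrow (2) \Rightarrow (3) \Rightarrow (1)$; the torus-specific leverage comes from the equality $\dim H^1(\mathbb{T}^2;\mathbb{R}) = 2 = \dim \mathbb{T}^2$, which makes Theorem \ref{teo3} and Corollary \ref{coro2} immediately applicable. Two of the three implications are essentially already in hand. The implication $(1) \Rightarrow (2)$ is a direct application of Corollary \ref{coro1}, since (1) provides exactly a neighborhood of $\mathcal{G}_{\eta,u} = \mathcal{A}_{[\eta]}^{\ast}$ inside the dual tiered Mañé set. The implication $(3) \Rightarrow (1)$ is also short: local Lipschitz integrability furnishes a neighborhood $\mathcal{V}$ of $\mathcal{G}_{\eta,u}$ foliated by invariant Lipschitz Lagrangian graphs; by the observation opening the proof of Theorem \ref{teo2} each leaf is absorbing, so Theorem \ref{teo1} gives $\mathcal{A}_{[\eta]}^{\ast} = \mathcal{G}_{\eta,u}$, and each leaf lying in the Mañé set of its own cohomology class places $\mathcal{V} \subset \mathcal{N}_{\ast}^{T}(L)$.

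The substantive direction is $(2) \Rightarrow (3)$. I would proceed in four steps. First, apply Proposition \ref{prop2} to upgrade (2) to the statement that $\alpha \in C^1(U_0)$ and $\beta \in C^1(V)$ with $V$ open; in particular $\partial\alpha(c) = \{h_c\}$ is a singleton for every $c \in U_0$ and $\partial\beta(h_c) = \{c\}$. Second, for each $c \in U_0$ argue that the Aubry set $\mathcal{A}_c^{\ast}$ projects onto all of $\mathbb{T}^2$; by Theorem 2.5 of \cite{fa2} this upgrades $\mathcal{A}_c^{\ast}$ to an invariant Lipschitz Lagrangian graph $\mathcal{G}_{\eta_c,u_c}$ with $u_c \in C^{1,1}$. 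Third, since $\beta$ is differentiable at every $h \in \partial\alpha(c) \subset V$, Theorem \ref{teo1} makes each $\mathcal{G}_{\eta_c,u_c}$ absorbing. Fourth, Theorem \ref{teo3}, whose dimension hypothesis is automatic on $\mathbb{T}^2$, then produces a neighborhood of $\mathcal{G}_{\eta,u}$ foliated by these absorbing graphs, which is precisely local Lipschitz integrability on $\mathcal{G}_{\eta,u}$ with $u = u_{[\eta]}$.

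The hard step is the second one, namely $\pi(\mathcal{A}_c^{\ast}) = \mathbb{T}^2$ for every $c \in U_0$. My approach mirrors the rational/irrational dichotomy used inside the proof of Proposition \ref{prop2}. When $h_c$ is rational, the Massart--Sorrentino result \cite{sor1} (Corollary 1) yields a foliation of $\mathbb{T}^2$ by periodic orbits lying in $\mathcal{A}_c$, and the projection claim is free. When $h_c$ is irrational, the $h_c$-minimizing measure is uniquely ergodic and supported on a lamination; the delicate point is to rule out the Denjoy-type alternative where this lamination leaves gaps. I would do this by exploiting that $\beta$ is differentiable throughout a $V$-neighborhood of $h_c$: since $\nabla\alpha : U_0 \to V$ is a homeomorphism (its inverse being $\nabla\beta$), rational homologies in $V$ pull back to a dense subset of $U_0$, so $c$ is approximable by cohomology classes $c_n \in U_0$ with $h_{c_n}$ rational; one then transfers the full-projection property from the $c_n$ to $c$ via upper semicontinuity of the Aubry set (cf.\ \cite{arn1}, Proposition 13). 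Turning this density-plus-continuity sketch into a clean argument is the true technical core of the theorem.
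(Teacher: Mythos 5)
Your proposal follows the paper's proof essentially step for step: the same cycle of implications, with $(1)\Rightarrow(2)$ from Corollary \ref{coro1}, $(3)\Rightarrow(1)$ from the absorbing property of the foliation leaves plus Theorem \ref{teo1}, and $(2)\Rightarrow(3)$ via Proposition \ref{prop2}, density of rational classes, \cite{sor1} Corollary 1, semicontinuity of the Aubry set, the converse direction of Theorem \ref{teo1} to get absorbing graphs, and Theorem \ref{teo3} --- indeed the ``technical core'' you identify (pulling back a dense set of rational homologies through the Legendre homeomorphism and transferring $\mathcal{A}_{c}=\mathbb{T}^{2}$ by upper semicontinuity of the Aubry set) is exactly the paper's argument, applied uniformly to all $c\in U_{0}$ without any separate treatment of the irrational case. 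The only detail you omit is verifying the non-singularity hypothesis needed to invoke \cite{sor1}, Corollary 1: the paper disposes of it by noting that a non-zero singular $h\in\partial\alpha\left(c\right)$ would force the Mather set of $c$ to contain a fixed point, hence $0\in\partial\alpha\left(c\right)$ as well, contradicting the $C^{1}$-regularity of $\alpha$ on $U_{0}$ furnished by Proposition \ref{prop2}.
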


\begin{proof}
$(1)\Rightarrow (2)$ It follows immediately of Corollary \ref{coro1}%
.\medskip \newline
$(2)\Rightarrow \left( 3\right) $ By Proposition \ref{prop2} $V$ is open in $%
H_{1}\left( \mathbb{T}^{2};%
\mathbb{R}
\right) $ and the Legendre transform is a homeomorphism between $V$ and $%
U_{0}.$ For each $h\in V,$ let us consider $c_{h}=\partial \beta \left(
h\right) .$ Then the set of the classes $c_{h}$ with $h$ rational and
non-singular is dense in $U_{0}.$ In fact, the set of the classes $c_{h}$
with $h$ rational (see \cite{mas1}, Lemma 7) is dense in $H^{1}\left(
\mathbb{T}^{2};%
\mathbb{R}
\right) $. Now note that zero is the only (possibly) singular class belongs
to $\partial \alpha \left( c\right) $ such that $c\in U_{0}$, because if a
non-zero class $h$ is singular belongs to $\partial \alpha \left( c\right) $
such that $c\in U_{0}$, then there is a fixed point in the Mather set of $c$%
. Thus $\partial \alpha \left( c\right) $ contains the homology of the Dirac
measure on the fixed point. This contradicts the Proposition \ref{prop2},
which says that $\alpha $ is $C^{1}$ in $U_{0}.$

It follows from (\cite{sor1}, Corollary 1) that $\mathcal{A}_{c_{h}}=\mathbb{%
T}^{2}$ is foliated by periodic orbits. By semicontinuity of Aubry set, we
have that $\mathcal{A}_{c}=\mathbb{T}^{2}$ for all $c\in U_{0}$ and that
there exist $\eta _{c}$ a representative of class $c$ and $u_{c}\in
C^{1,1}\left( \mathbb{T}^{2}\right) $ such that $\mathcal{A}_{c}^{\ast }=%
\mathcal{G}_{\eta _{c},u_{c}}.$ In particular, $\mathcal{A}_{\left[ \eta %
\right] }^{\ast }=\mathcal{G}_{\eta ,u}$ for some $u\in C^{1,1}.$ Moreover,
these graphs are absorbing. Indeed, this follows from assumption that $\beta
$ is differentiable at any point $V=\bigcup\limits_{c\in U_{0}}\partial
\alpha \left( c\right) $ and from converse of Theorem \ref{teo1}. Therefore,
since $\dim H_{1}\left( \mathbb{T}^{2};%
\mathbb{R}
\right) =\dim
\mathbb{R}
^{2},$ by the Theorem \ref{teo3}, we obtain a neighborhood of $\mathcal{G}%
_{\eta ,u}$ foliated by invariant Lipschitz Lagrangian graphs. \medskip
\newline
$\medskip (3)\Rightarrow \left( 1\right) $ All point of $\mathcal{V}$
belongs to some invariant Lipschitz Lagrangian graphs. In particular,
belongs to some Mañé set, so $\mathcal{V}\subset \widetilde{\mathcal{N}}%
^{T}\left( L\right) .$ Since graphs of a foliation are absorbing, by Theorem %
\ref{teo1} we have $\mathcal{A}_{\left[ \eta \right] }^{\ast }=\mathcal{G}%
_{\eta ,u}.$
\end{proof}

\section{\ An Example: vertical exact magnetic Lagrangian}

In this section we present a Lagrangian on the two torus $\mathbb{T}^{2}$
such that the $\beta $-function is of class $C^{1}$ in the open set
\begin{equation*}
A=\left\{ \left( h_{1},h_{2}\right) \in H_{1}\left( \mathbb{T}^{2};%
\mathbb{R}
\right) :h_{1}\neq 0\right\}
\end{equation*}%
and is not differentiable at any point outside of $A.$ Let us consider the
magnetic Lagrangian on the two torus $\mathbb{T}^{2}$ defined by
\begin{equation*}
L\left( x,y,v_{1},v_{2}\right) =\frac{\left\Vert v\right\Vert ^{2}}{2}%
+\left\langle \left( 0,\cos \left( 2\pi x\right) \right) ,v\right\rangle
\end{equation*}%
where the metric $\left\Vert .\right\Vert $ is induced by inner product.
This type of convex and superli-near Lagrangian is an example of vertical
magnetic Lagrangian, apresented in \cite{car1}, in which the authors were
interested in flats of $\beta $ function. Here we are interested in the
differentiability of $\beta $ and consequently in flats of $\alpha .$

The Euler-Lagrange flow associated with this Lagrangian is generated by the
vector field:
\begin{equation*}
X_{L}:\left\{
\begin{array}{l}
\dot{x}=v \\
\dot{v}=-2\pi \sin \left( 2\pi x\right) Jv%
\end{array}%
\right.
\end{equation*}%
where $J$ is the $2\times 2$ canonical sympletic matrix. Since the energy
function for $L$ is $E\left( x,y,v\right) =\frac{1}{2}\left\Vert
v\right\Vert ^{2},$ for each energy level $E>0,$ we can consider the angle $%
\varphi $ (with horizontal line) of trajectories of the Euler-Lagrange flow.
This means that $\varphi $ is the new parameter of $v=\left(
v_{1},v_{2}\right) :$
\begin{equation*}
v_{1}=\sqrt{2E}\cos \varphi ,v_{2}=\sqrt{2E}\sin \varphi .
\end{equation*}%
It is easy to see that $H\left( x,\varphi \right) =\cos \left( 2\pi x\right)
+\sqrt{2E}\sin \varphi $ is a first integral. The critical points of $H$ are
$\left( 0,\frac{\pi }{2}\right) $-maximum, $\left( 0,-\frac{\pi }{2}\right) $%
-saddle, $\left( \frac{1}{2},\frac{\pi }{2}\right) $-saddle and $\left(
\frac{1}{2},-\frac{\pi }{2}\right) $-minimum.

Depending of the level of energy, there exist or not invariant Lipschitz
Lagrangian graphs. A suficiently condition for existing invariant Lipschitz
Lagrangian graphs in the level of energy $E$ is $E>\frac{1}{2}.$ In fact, if
$\left\vert F\right\vert <\sqrt{2E}-1,$ the level $H^{-1}\left( F\right) $
of $H$ is composed by two graphs. This follows from Implicit Function
Theorem. Indeed, $\frac{\partial H}{\partial \varphi }=\sqrt{2E}\cos \varphi
=0$ if and only if $\varphi =\pm \frac{\pi }{2}.$ In this case, $\cos \left(
2\pi x\right) \pm \sqrt{2E}=F$ implies $F\geq \sqrt{2E}-1$ or $F\leq 1-\sqrt{%
2E},$ which contradicts $\left\vert F\right\vert <\sqrt{2E}-1.$ This also
show that the two graphs of $\varphi _{1}$ and $\varphi _{2}$ with $\varphi
_{2}=\pi -\varphi _{1},$ given implicitly by equation%
\begin{equation*}
\cos \left( 2\pi x\right) +\sqrt{2E}\sin \varphi =F
\end{equation*}%
are invariant for $E>\frac{1}{2}$ and $\left\vert F\right\vert <\sqrt{2E}-1.$

\begin{figure}[tbp]
\centering
\includegraphics{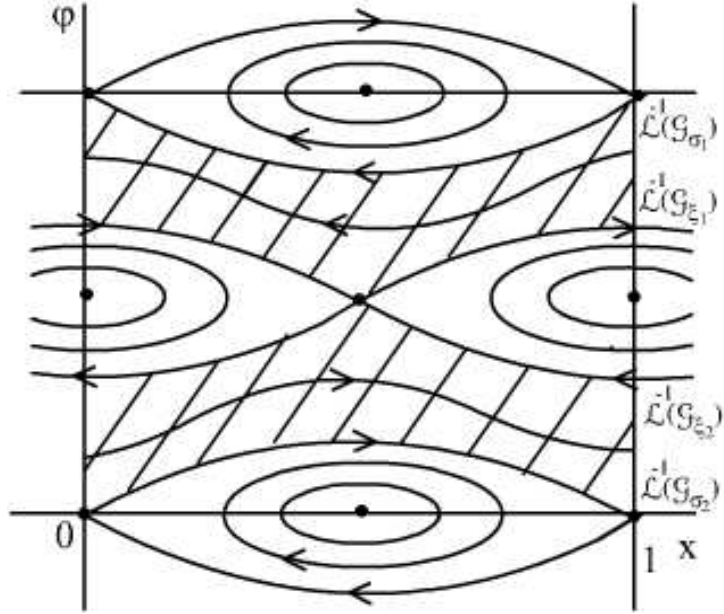} \label{fig}
\caption{Energy level $E>\frac{1}{2}$}
\end{figure}

The figure 1 describes the projection these graphs in the section $x\varphi ,
$ in the energy level $E>\frac{1}{2}.$ Let us consider the closed 1-forms%
\begin{equation}
\eta _{i}=\sqrt{2E}\cos \varphi _{i}\left( x\right) dx+Fdy,i=1,2,
\label{fo3}
\end{equation}%
dependent of $E$ and $F.$ Since $\mathcal{L}\left( x,v\right) =\left(
x,\left\langle \left( v_{1},v_{2}+\cos \left( 2\pi x\right) \right) ,\cdot
\right\rangle \right) ,$ we have $\mathcal{G}_{\eta _{i},0}=\mathcal{L}%
\left( x,\sqrt{2E}\cos \varphi _{i},\sqrt{2E}\sin \varphi _{i}\right) $.
These graphs compose a local foliation in $T^{\ast }\mathbb{T}^{2}$, then
are absorbing graphs. It follows from Theorem \ref{teo1} that $\beta $ is
differentiable at any point of $\partial \alpha \left( \left[ \eta _{i}%
\right] \right) ,$ where
\begin{equation}
\left[ \eta _{i}\right] =\int_{0}^{1}\sqrt{2E}\cos \varphi _{i}\left(
x\right) dx+Fdy,  \label{fo4}
\end{equation}%
for all $i=1,2,$ $E>\frac{1}{2}$ and $F$ with $\left\vert F\right\vert <%
\sqrt{2E}-1.$ Moreover, by Proposition \ref{prop2}, we conclude that $\alpha
$ function is differentiable at $\left[ \eta _{i}\right] .$

Take $F\rightarrow \sqrt{2E}-1$ by left and $F\rightarrow 1-\sqrt{2E}$ by
right in (\ref{fo4}), for each $i=1,2$, we obtain four closed 1-forms $%
\sigma _{1},\sigma _{2},\xi _{1}$ and $\xi _{2}$ whose graphs are the
connections of saddle. Therefore $\mathcal{G}_{\sigma _{1},0}\cap \mathcal{G}%
_{\sigma _{1},0}\neq \emptyset $ and the graphs $\mathcal{G}_{\sigma _{1},0}$
and $\mathcal{G}_{\sigma _{2},0}$ is not absorbing. Moreover, since $%
\mathcal{G}_{\eta _{i},0}$ are absorbing graphs, $\mathcal{A}_{\left[ \eta
_{i}\right] }^{\ast }=\mathcal{G}_{\eta _{i},0}.$ Then, by semicontinuity of
Aubry set, we obtain $\mathcal{A}_{\left[ \sigma _{i}\right] }=\mathbb{T}^{2}
$, so $\mathcal{A}_{\left[ \sigma _{i}\right] }^{\ast }=\mathcal{G}_{\sigma
_{i},0}.$ It follows from (\cite{mas1}, Proposition 6) that $\left[ \sigma
_{1}\right] $ and $\left[ \sigma _{2}\right] $ belong to same flat of $%
\alpha .$ We obtain, analogously, that $\left[ \xi _{1}\right] $ and $\left[
\xi _{2}\right] $ belong to same flat of $\alpha .$ This show that $\beta $
is neither differentiable at points of $\partial \alpha \left( \left[ \sigma
_{i}\right] \right) $ nor at points of $\partial \alpha \left( \left[ \xi
_{i}\right] \right) .$ Actually, the homology classes which belong to $%
\partial \alpha \left( \left[ \sigma _{i}\right] \right) $ and $\partial
\alpha \left( \left[ \xi _{i}\right] \right) $ are $\pm \sqrt{2E}\left(
0,1\right) $. In fact, the intersection $\mathcal{A}_{\left[ \sigma _{1}%
\right] }\cap \mathcal{A}_{\left[ \sigma _{2}\right] }$ is the intersection
of two graphs given by implicit equation\
\begin{equation*}
\cos \left( 2\pi x\right) +\sqrt{2E}\sin \varphi =\sqrt{2E}-1.
\end{equation*}%
Thence $\mathcal{A}_{\left[ \sigma _{1}\right] }\cap \mathcal{A}_{\left[
\sigma _{2}\right] }$ is the closed curve $\gamma _{1}:t\mapsto \left( \frac{%
1}{2},\sqrt{2E}t\right) .$ In particular, $\gamma _{1}$ belongs to $\mathcal{%
M}_{\left[ \sigma _{1}\right] }\cap \mathcal{M}_{\left[ \sigma _{2}\right] }$
which supports the measure $\mu _{1}$ with rotation vector equals $\rho
\left( \mu _{1}\right) =\sqrt{2E}\left( 0,1\right) .$ Analogously, $\mathcal{%
A}_{\left[ \xi _{1}\right] }\cap \mathcal{A}_{\left[ \xi _{2}\right] }$ is
the closed curve $\gamma _{2}:t\mapsto \left( 0,-\sqrt{2E}t\right) $ which
supports a probability measure $\mu _{2}$ with $\rho \left( \mu _{2}\right)
=-\sqrt{2E}\left( 0,1\right) .$ Therefore we prove that $\beta $ function is
not differentiable at any homology class of the form $\left( 0,h_{2}\right) .
$

\begin{remark}
Since $H$ and $E$ are first integrals, given two constants $a$ and $b,$ the
level set $\left( H,E\right) ^{-1}\left( a,b\right) =\left\{ \left(
x,v\right) :H\left( x,v\right) =a,E\left( x,v\right) =b\right\} $ is an
absorbing set. Therefore, it follows from Proposition \ref{pro3} that if $%
H\left( \widetilde{\mathcal{A}}_{c}\right) =F$ for some $c\in H^{1}\left(
\mathbb{T}^{2};%
\mathbb{R}
\right) ,$ then $\left( H,E\right) ^{-1}\left( F,\alpha \left( c\right)
\right) $ projects onto the whole torus $\mathbb{T}^{2}.$
\end{remark}

By previous remark, for all $c\in H^{1}\left( \mathbb{T}^{2};%
\mathbb{R}
\right) $ the Aubry set $\widetilde{\mathcal{A}}_{c}$ is contained in an
invariant Lipschitz Lagrangian graph. Since the supports of minimizing
measures contained in the graphs $\mathcal{G}_{\sigma _{i},0}$ and $\mathcal{%
G}_{\xi _{i},0}$ have vector rotation of the form $\left( 0,h_{2}\right) ,$
if $h\in A$ then $\widetilde{\mathcal{M}}^{h}$ is contained in a
neighborhood foliated by Lipschitz Lagrangian graphs. It follows from
Theorem \ref{teo4} and Proposition \ref{prop2} that $\beta $ is of class $%
C^{1}$ in some neighborhood of $h$ and hence of class $C^{1}$ in $A.$

\addcontentsline{toc}{chapter}{Bibliografia}

\end{document}